\date{}
\title{Differential algebras with Banach-algebra
coefficients I: From C*-algebras to the K-theory of the spectral
curve}
\author{
Maurice J. Dupr\'e \\ Department of Mathematics, Tulane
University\\ New Orleans, LA 70118 USA\\ mdupre@tulane.edu \\ \\
James  F. Glazebrook
\\(Primary Inst.)\\ Department of Mathematics and Computer
Science \\
 Eastern Illinois University \\
600  Lincoln Ave., Charleston, IL 61920--3099 USA \\
jfglazebrook@eiu.edu
\\ (Adjunct Faculty)
\\ Department of Mathematics \\ University of Illinois at
Urbana--Champaign\\ Urbana, IL 61801, USA
\\
\\ Emma Previato\thanks{Partial research support under grant
NSF-DMS-0808708 is very gratefully acknowledged.}
\\ Department of Mathematics and Statistics, Boston University
\\Boston, MA 02215--2411, USA \\
ep@math.bu.edu }
\theoremstyle{plain}
\newtheorem{lemma}{Lemma}[section]
\newtheorem{proposition}{Proposition}[section]
\newtheorem{theorem}{Theorem}[section]
\theoremstyle{definition}
\newtheorem{definition}{Definition}[section]
\newtheorem{example}{Example}[section]
\newtheorem{remark}{Remark}[section]
\numberwithin{equation}{section}
\newcommand{\alg}{{\rm alg}}
\newcommand{\Det}{{\rm Det}}
\newcommand{\Sim}{{\rm Sim}}
\newcommand{\ch}{{\rm ch}}
\newcommand{\Ext}{{\rm Ext}}
\newcommand{\Fred}{{\rm Fred}}
\newcommand{\Gr}{{\rm Gr}}
\newcommand{\Hom}{{\rm Hom}}
\newcommand{\IM}{{\rm Im}}
\newcommand{\Int}{{\rm Int}}
\newcommand{\Mero}{{\mathsf{Mero}}}
\newcommand{\sa}{{\rm sa}}
\newcommand{\Spec}{{\rm Spec}}
\newcommand{\Tr}{{\rm Tr}}
\newcommand{\vp}{\varphi}
\newcommand{\A}{\boldsymbol{\mathcal{A}}}
\newcommand{\B}{\mathcal B}
\newcommand{\E}{\mathcal E}
\newcommand{\K}{\mathcal K}
\newcommand{\cL}{\mathcal L}
\newcommand{\Q}{\mathcal Q}
\newcommand{\T}{\mathcal T}
\newcommand{\bA}{\mathbb{A}}
\newcommand{\bB}{\mathbb{B}}
\newcommand{\bC}{\mathbb{C}}
\newcommand{\bQ}{\mathbb{Q}}
\newcommand{\bZ}{\mathbb{Z}}
\newcommand{\hp}{\mathsf{H}_{+}}
\newcommand{\hm}{\mathsf{H}_{-}}
\newcommand{\hpm}{\mathsf{H}_{\pm}}
\newcommand{\esf}{\mathsf{e}}
\newcommand{\ra}{\rightarrow}
\newcommand{\lra}{\longrightarrow}
\newcommand{\ovsetl}[1]{\overset {#1}{\lra}}
\newcommand{\ul}{\underline}
\newcommand{\wti}{\widetilde}
\newcommand{\uA}{\underline{\mathbb{A}}}
\newcommand{\uB}{\underline{\mathbb{B}}}
\newcommand{\Acl}{\bar{{A}}}
\newcommand{\sfT}{\mathsf{T}}
\newcommand{\del}{\partial}
\newcommand{\delbar}{\bar{\partial}}
\newcommand{\dm}{\del^{-1}}
\newcommand{\med}{\medbreak}
\begin{document}
%%%%%%%%%%%%%%%%%%%%%%%%%%%%%%%%%%%%%%%%%%%%%%%%%%%%
%  \renewcommand{\theequation}{A-\arabic{equation}}
%  % redefine the command that creates the equation no.
%  \setcounter{equation}{0}  % reset counter
%  \section*{APPENDIX}  % use *-form to suppress numbering

\maketitle

\begin{abstract}
We present an operator-coefficient version
of Sato's infinite-dimensional Grassmann manifold, and $\tau$-function.
In this context, the Burchnall-Chaundy ring of commuting
differential operators becomes a C*-algebra, to which we apply the
Brown-Douglas-Fillmore theory, and
topological invariants of the spectral ring become readily available.
We construct KK classes of the spectral curve
of the ring and, motivated by the fact that all isospectral
Burchnall-Chaundy rings make up the Jacobian of the curve, we
compare the (degree-1) K-homology of the curve with that of its
Jacobian. We show how the Burchnall-Chaundy C*-algebra
extension of the compact
operators provides a family of operator-valued $\tau$-functions.
%The differential geometry of the universal bundle on the
%Grassmannian is used to enlarge the definition of
%the $\tau$-function into a cross-ratio of four operators,
%so that a Schwarzian derivative can be used to keep track
%of the appropriate cocycle (Part II).  The two parts are
%brought together via a Poincar\'e bundle over the spectral curve cross its
%Jacobian used in obtaining a parametrization of the KP-hierarchy in terms of
%extension classes of the Burchnall-Chaundy C*-algebra.
\end{abstract}

\med \textbf{Mathematics Subject Classification (2010)} Primary:
46L08, 19K33,
 19K35.
Secondary: 13N10, 14H40, 47C15

\med \textbf{Keywords}: C*-algebras, extension groups of operator
algebras,
 K-homology,
spectral curve, Baker function, $\tau$-function.

\section{Introduction}\label{intro}

This work arises from \cite{DGP1,DGP3}
where we started an operator-theoretic, Banach algebra approach
to the Sato-Segal-Wilson theory in the setting of Hilbert modules
with the extension of the classical Baker and
$\tau$-functions to operator-valued functions.
Briefly recalled, the Sato-Segal-Wilson theory \cite{Sato,SW} in the context of
integrable Partial Differential Equations\footnote{The KP hierarchy
is the model we use; variations would utilize slightly different
objects of abstract algebra, e.g. matrices instead of scalars.}  (PDEs)
produces a linearization of the time flows, as one-parameter
groups acting on Sato's `Universal Grassmann Manifold',
via conjugation of the trivial solution ($\tau \equiv 0$)
by a formal pseudodifferential operator.
An important issue, which underlies our work, is
that the Grassmannian considered is analytic as opposed to formal.
The algebro-geometric solutions, which we focus on, come
from a Riemann surface, or from an algebraic curve (the former
case is our default assumption, that is to say,
the curve is smooth) which is a `spectral curve' in the sense of
J. L. Burchnall
and T. W. Chaundy (see e.g. \cite{Krich1} for an updated account
of their work). These authors in the 1920's gave  a classification of the
rank-one\footnote{Namely, such that the orders of the operators in
the ring are not all divisible by some number $r>1$; if they are,
the ring corresponds to a rank-$r$ vector bundle.} commutative rings
of Ordinary Differential Operators (ODOs) with  fixed spectral
curve, as the Jacobi variety of the curve (line bundles of fixed
degree, up to isomorphism),
using differential algebra.
 The Krichever map \cite{Krich1,SW} sends a
quintuple of holomorphic data associated to the ring to Sato's
infinite-dimensional Grassmann manifold.

Using the Sato correspondence we used a
conjugating action by an integral operator and showed
 how the conjugated Burchnall-Chaundy ring $\bA$ of
pseudodifferential operators can be represented as a commutative
subring of a certain Banach *-algebra $A$, a `restricted'
subalgebra of the bounded linear operators $\cL(H_{\A})$ where
$H_{\A}$ is a \emph{Hilbert module} over a C*-algebra $\A$.
A particular feature in this work is the Banach
Grassmannian $\Gr(p,A)$ which resembles in a
more general sense the restricted Grassmannians formerly introduced
in \cite{PS,SW} and these latter spaces can be recovered as
subspaces of $\Gr(p,A)$. Some of our results require the algebra
$\A$ to be commutative and therefore $\A \cong
C(Y)$, for some compact Hausdorff space $Y$.
The commutative ring $\bA$ has as its joint spectrum $X'=
\Spec(\bA)$ an irreducible complex curve whose one-point
compactification is a non-singular (by assumption)
algebraic curve $X$ of genus $g_X
\geq 1$. With $\A$ being commutative, we obtain a $Y$-parametrized
version of the Krichever correspondence (originally
given for the case $\A = \bC$).

To transfer information between the geometry of integrable
PDEs and that of operator algebras, we
show that naturally associated to the
ring $\bA$ is a C*-algebra $\uA$ (which we refer to as the
\emph{Burchnall-Chaundy C*-algebra}) which, by the
Gelfand-Neumark-Segal theorem, can be realized as a C*-algebra of
operators on an associated Hilbert space $H(\A)$. For any generating
$s$-tuple of commuting operators in $\uA$ we consider its joint
spectrum and show that it is homeomorphic to the joint spectrum
$X'$.

This paper also considers the K-homology of $C(X)$ and
related theory. In particular, by the Brown-Douglas-Fillmore (BDF)
theorem, the group $K^1(C(X))$ is the extension group $\Ext(X)$, of
the compact operators $\K(H)$ by $C(X)$, in the sense that there is
a natural transformation of covariant functors
$$ \Ext(X) \lra \Hom (K^1(X), \bZ), $$ and this extension group,
in turn, can be identified with the C*-isomorphisms of $C(X)$ into
the Calkin algebra $\Q(H)$. Using only the commutative subalgebra we
constructed from the Burchnall-Chaundy ring, for the case
$\A=\mathbb{C}$, we obtain some of these C*-isomorphisms,
parametrizing in fact the Jacobian $J(X)$ of $X$ (which is the group
$\Ext^1(\mathcal O_X, \mathcal O_X^*)\cong H^1(X, \mathcal O^*)$).
Finally, using the Abel map which embeds $X \lra J(X)$, we fit the
three C*-algebras $\uA, C(X)$ and $C(J(X))$ into the framework of
BDF theory \cite{BDF}. We show, using the basic instrumentation of K-homology, that there exists
a natural injection of Ext groups $\Ext(X) \lra \Ext(J(X))$ (an
isomorphism when the genus $g_X$ is 1). Finally, in this Part I we consider
the operator ($\A$-) valued $\tau$-function as defined on a group of multiplication
operators and show that extensions of the compact operators
by the Burchnall-Chaundy C*-algebra $\uA$,
yield a family of $\tau$-functions attached  to `transverse'
subspaces $W \in \Gr(p,A)$, one for each such extension.

Part II \cite{DGP4} of this work involves a number of extensions of Part I, in particular a geometric setting for
predeterminants of the $\tau$-function, called {$\T$-functions}. In this setting it is possible
pull back the tautological bundle over
the Grassmannian, and obtain, much as in the algebraic case of
\cite{Alvarez}, a Poincar\'e bundle over a product of homogeneous
varieties for operator group flows, whose fibres will be related by
the BDF theorem to the Jacobians of the spectral curves. An operator
cross-ratio on the fibres provides the $\tau$-function, and
we  investigate the Schwarzian
derivative associated to the cross-ratio.
As a result, operator-valued objects defined
with a minimum of topological assumptions  obey the same
hierarchies as the corresponding objects arising from
the theory of special functions.
Then we bring together the main results of this Part I to show in Part II \cite{DGP4}
that solutions to the KP hierarchy can in fact be parametrized by elements of
$\Ext(\uA)$. Thus we view our results as an enhancement for operator theory
and a potential enhancement for the
theory under development of geometric quantization and integrable PDEs,
where functional Hamiltonians are replaced by operators. Other approaches
in the operator-theoretic setting as applied to control theory
are studied in \cite{Helton} and to Lax-Phillips scattering in \cite{Ball2}.
The present work could in part be viewed as extensions of the Cowen-Douglas theory
\cite{CD,MS2} linking complex analytic/algebraic techniques to operator theory,
thus suggesting future developments.

This Part I is organized as follows: for the reader's sake we recall the necessary
setting and details from \cite{DGP1} and other relevant works in an Appendix \S\ref{banachable}
including our parametrized version of the Krichever
correspondence,
and proceed from \S\ref{basic} to the construction of the above-described
C*-algebras in \S\ref{C*algebra}.  In \S\ref{extension} we
pursue the BDF theory and attach topological
objects to the spectral curve and Burchnall-Chaundy ring.

The new results we have obtained in this Part I are Theorem \ref{spectra}, Propositions \ref{family}, \ref{jacobian},
Theorems \ref{SES}, \ref{map}, \ref{commutative}, \ref{tau-ext-theorem}.

%%%%%%%%%%%%%%%%%%%%%%%%%%%%%%%%%%%%%%%%%%%%%%%%%%%%%%%%%%%%%%%%%%%%%%

\section{The basic setting}\label{basic}

\subsection{The restricted Banach *-algebra $A$ and the space
$\Gr(p,A)$}\label{restricted-1}

Let $A$ be a unital complex
Banach(able) algebra with group of units $G(A)$ and space of
idempotents $P(A)$. An important carrier for what follows
is an infinite dimensional Grassmannian, denoted $\Gr(p,A)$, whose structure as a Banach homogeneous space
is briefly described in Appendix \S\ref{projections} as well as its universal bundle.
Similar spaces and related objects have been used by a number of authors in dealing with a variety of
different operator-geometric problems (see \S\ref{projections} Remark \ref{others-remark}).

\begin{remark}\label{proj-remark}
We point out that we use the term projection to simply mean an
idempotent even though functional analysts often use the term
`projection' to mean a self-adjoint idempotent when dealing with
a *-algebra.  Consequently we will explicitly state when an idempotent
is or should be required to be self-adjoint. In the latter case we
denote the set of these as $P^{\sa}(A)$.
\end{remark}

Let $H$ be a separable (infinite dimensional) Hilbert space.
Given a unital separable C*-algebra $\A$, we take the
standard (free countable dimensional) Hilbert module $H_{\A}$ over
$\A$ (see Appendix \S\ref{semigroup})
and consider a \emph{polarization} of $H_{\A}$ given by a pair of submodules
$(\hp, \hm)$, such that
\begin{equation}\label{polar1}
H_{\A} = \hp \oplus \hm ~,~\text{and}~ \hp \cap \hm = \{0\}.
\end{equation}

If we have a unitary $\A$--module map $J$ satisfying
$J^2 = 1$, there is an induced eigenspace decomposition $H_{\A} =
\hp \oplus \hm$, for which $\hpm \cong H_{\A}$. This leads
to the (restricted) Banach algebra $A = \cL_J(H_{\A})$ as described in
\cite{DGP1} (generalizing that of $\A = \bC$ in \cite{PS}).
Specifically, we have
\begin{equation}\label{staralgebra}
A = \cL_J(H_{\A}) := \{\sfT \in \cL_{\A} (H_{\A}) : \sfT ~\text{is
adjointable and}~ [J,\sfT] \in \cL_2(H_{\A}) \} ,
\end{equation}
(by definition $\cL_2(H_{\A})$ denotes the Hilbert-Schmidt
operators) where for $\sfT \in A$, we assign the norm (cf.
\cite[\S6.2]{PS}):
\begin{equation}\label{jnorm}
\Vert \sfT \Vert_J = \Vert \sfT \Vert ~+~ \Vert [J,\sfT]\Vert_{2}.
\end{equation}
The algebra $A$ can thus be seen as a Banach *-algebra with
isometric involution (when $\A \cong \bC$ we simply write
$\cL_J(H)$). This is our `restricted' algebra which we will use
henceforth. Together with the topology induced by $\Vert~ \Vert_J$,
the group of units $G(A)$ is a complex Banach Lie group for which we
have the unitary Lie subgroup $U(A) \subset G(A)$. Further, $\Gr(p,A)$
as a Banach manifold, can also be equipped with a Hermitian structure
(see \S \ref{projections} and references therein).

We denote by $\Acl$, the C*-algebra norm closure of
$A$ in the C*-algebra $\cL(H_{\A})$. By standard principles, $A$ is thus a
Banach
*-algebra with isometric involution dense in the
C*-algebra $\Acl$. In particular, as $A$ is a Banach algebra, it is
closed under the holomorphic functional calculus, and as it is a
*-subalgebra of the C*-algebra $\Acl$, it is therefore a (unital)
\emph{pre-C*-algebra} in the sense of \cite{Bl,Connesbook}.

%\begin{remark}\label{pol-remark}
%Observe that for a given $p \in P(A)$, there is associated to $\Gr(p,A)$ its
%\emph{dual Grassmannian} $\Gr^{*}(p,A)$ \cite[\S3]{DGP3}. Let $\pol$
%denote the space of polarizations $(\hp, \hm)$ on $H_{\A}$. Then as shown in
%\cite[\S3]{DGP3}, the space $\pol$ can be regarded as a subspace
%\begin{equation}
%\pol \subset \Gr(p, A) \times \Gr^*(p, A).
%\end{equation}
%Relative to $p \in P(A)$, a significant observation is that $\pol$ can be
%related to the similarity class $\Lambda = \Sim(p,A)$ where $\Lambda$ consists
%of elements of $\pol$ expressed as projections.
%In fact, from \cite[Theorem 4.1(3)]{DGP3} there exists
%an analytic diffeomorphism ${\phi} : \pol \lra \Lambda \subset P(A)$.
%\end{remark}

\begin{remark}
For the most part of this work we shall be taking the Hilbert space to be $H = L^2(S^{1}, \bC)$
without loss of generality.
\end{remark}

\subsection{The case where $\A$ is commutative}\label{commcase}

We shall henceforth assume that
$\A$ is a commutative separable
C*-algebra. The Gelfand transform implies there exists a compact
metric space $Y$ such that $Y = \Spec(\A)$ and $\A \cong C(Y)$.
Setting $B = \cL_J(H)$, we can now express the Banach *-algebra
$A$ in the form
\begin{equation}\label{aform}
A \cong \{\text{continuous functions}~ Y \lra B
\} = C(Y,B),
\end{equation}
for which the $\Vert ~ \Vert_2$-trace in the norm of $A$ is
regarded as continuous as a function on $Y$. Note the Banach
algebra $B = \cL_J(H)$ corresponds to taking $\A= \bC$, and with
respect to the polarization $H = H_{+} \oplus H_{-}$, we recover
the usual restricted Grassmannians, denoted $\Gr(H_{+}, H)$ of
\cite{PS,SW}.

%%%%%%%%%%%%%%%%%%%%%%%%%%%%%%%%%%%%%%%%%%%%%%%%%%%%%%%%%%%%%%%%%%%%%%%%%%%%%

\section{The Burchnall-Chaundy C*-algebra}\label{C*algebra}

Appendices \S\ref{essential} and \S\ref{BCring+} reviews the Burchnall-Chaundy algebra and related geometry.
We henceforth use the notation introduced there, referencing to it
as we go along.

Motivated by Theorem \ref{rep1} we obtain a natural
C*-algebra associated to the ring $\bA$ (\S\ref{essential}).
Firstly, in view of Theorem
\ref{rep1}, let
\begin{equation}\label{inclusion1}
i_K : \bA \longrightarrow A,
\end{equation}
be the inclusion map induced via conjugation by the integral
operator $K$ \eqref{integral-op} in Appendix \eqref{integralop}.
We also  recall the C*-algebra
$\Acl$ of \S\ref{restricted-1}.
\begin{definition}\label{bcalgebra}
The \emph{Burchnall-Chaundy C* algebra}, denoted $\uA$, is the
C*-subalgebra of $\Acl$ generated by $i_K(\bA)$.
\end{definition}
Thus $\uA$ is a separable C*-subalgebra of $\Acl$ (this uses the
fact that $\Acl$ is separable). We also endow $\uA$ with an identity
as induced from that of $\Acl$. Further, let
\begin{equation}\label{inclusion2}
\ul{i}_K : \bA  \lra \uA,
\end{equation}
be the induced map of \eqref{inclusion1} into the C*-algebra $\uA$
where both maps in \eqref{inclusion1}, \eqref{inclusion2} factor
through the tensor product $\bA \otimes \A$.

\begin{remark}
Note that the relevant algebras considered
above, are already inside the C*-algebra $\cL_{\A}(H_{\A})$. In view
of Remark \ref{hilbertremark}, applying a completely positive state
to pass to $H(\A)$ serves to make $\cL_{\A}(H_{\A})$ a C*-subalgebra
of $\cL(H(\A))$. Thus we may, if we wish, identify $\uA$ as a
particular C*-subalgebra of $\cL(H(\A))$.
\end{remark}

In the following we will also consider a commutative Banach
subalgebra $\uB$ of $\uA$ as generated by elements of $i_K(\bA)$
that closes the image of $i_K$ in the norm topology. As maximal
ideals are proper closed ideals, the image of $i_K$ and $\uB$ have
the same spectrum in the weak *-topology (recall that as the hull
kernel topology is completely regular and Hausdorff, it follows that
the weak *-topology and hull kernel topology are the same
\cite{Rickart}). We shall use this fact in establishing Theorem
\ref{spectra} below.

Suppose we have some finite number $s$ of commuting operators $L_i
\in \bA$, for $1 \leq i \leq s$. Recalling \eqref{inclusion2} and
setting $T_i = \ul{i}_K(L_i) \in \uB$, leads to
\begin{equation}\label{generators}
0 = \ul{i}_K([L_i, L_j]_{\bA}) = [\ul{i}_K(L_i),
\ul{i}_K(L_j)]_{\uA} = [T_i, T_j]_{\uA} ,
\end{equation}
and therefore to a commuting $s$-tuple $\T(s) \equiv (T_1,
\ldots, T_s) \in \uB^s$.

We next specify the connection between the joint spectrum of the
Burchnall-Chaundy ring $\bA$ and the joint spectrum $\sigma(\T(s),
\uB)$ of  a generating commuting $s$-tuple $\T(s)$ of operators in
the Banach subalgebra $\uB$.
\begin{theorem}\label{spectra}
For any $s$-tuple $\T(s)\in \uB^s$ of commuting operators that
generate $\uB$, the map of spectra
\begin{equation}
\sigma(\T(s), \uB) \lra  (X' = \Spec(\bA)) \times Y,
\end{equation}
is a homeomorphism.
\end{theorem}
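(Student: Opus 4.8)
The plan is to establish the homeomorphism by exploiting the tensor-factorization structure already flagged in the excerpt, namely that the maps $i_K$ and $\ul{i}_K$ factor through $\bA \otimes \A$, together with the Gelfand-theoretic identification $\A \cong C(Y)$. First I would recall that for a commuting $s$-tuple $\T(s)$ generating a commutative Banach algebra $\uB$, the joint spectrum $\sigma(\T(s),\uB)$ is naturally homeomorphic to the maximal ideal space (Gelfand spectrum) of $\uB$, via the map sending a character $\chi$ to the point $(\chi(T_1),\dots,\chi(T_s)) \in \bC^s$; this is the standard identification of joint spectrum with character space, and the image is independent of the chosen generating tuple. Thus the theorem reduces to showing that $\Spec(\uB) \cong X' \times Y$ as topological spaces, where the left side is taken in the weak-$*$ (equivalently hull-kernel) topology. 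The remark preceding the statement supplies the crucial reduction: since $\uB$ and the image $i_K(\bA)$ have the same spectrum, and since the weak-$*$ and hull-kernel topologies agree here, I may compute $\Spec(\uB)$ using the dense subalgebra $i_K(\bA)$.

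The central step is to analyze the characters of the image algebra. Because $i_K$ factors through $\bA \otimes \A$, every generator $T_i = \ul{i}_K(L_i)$ is built from an element $L_i \in \bA$ together with the commutative coefficient algebra $\A \cong C(Y)$. The plan is to show that the Gelfand spectrum of the completed image decomposes as a product: a character of $\uB$ restricts to a joint eigenvalue assignment on the $\bC$-algebra generated by $i_K(\bA)$ (the Burchnall-Chaundy data), which by Theorem \ref{rep1} and the Krichever correspondence recorded in the Appendix is governed by $X' = \Spec(\bA)$, and simultaneously to a point of $Y = \Spec(\A)$ coming from the $C(Y)$-valued coefficients via the identification $A \cong C(Y,B)$ of \eqref{aform}. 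Concretely, I would construct the two projection maps $\Spec(\uB) \to X'$ and $\Spec(\uB) \to Y$ from the inclusions $\bA \hookrightarrow \bA\otimes\A$ and $\A \hookrightarrow \bA\otimes\A$, assemble them into a continuous map $\Spec(\uB) \to X' \times Y$, and then exhibit an inverse by showing that a point $(x,y) \in X'\times Y$ determines a unique multiplicative functional on $i_K(\bA)$ (evaluate $L$ at the spectral point $x$, then evaluate the resulting $C(Y)$-valued quantity at $y$). That this assignment is multiplicative and well-defined on the closure uses commutativity of $\uB$ and the density argument above.

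Continuity and the homeomorphism property would then follow from compactness: $Y$ is compact, and $X'$ is the affine spectral curve whose one-point compactification $X$ is compact, so I would either work with $X'$ directly as a locally compact space (noting $\uB$ may be non-unital over the curve direction, or that one restricts to the proper spectrum) or pass to the compactified picture and track the point at infinity carefully. Since a continuous bijection from a compact space to a Hausdorff space is automatically a homeomorphism, the main work is to verify (i) the assembled map is a continuous bijection and (ii) the relevant spaces have the needed compactness/Hausdorff properties in the weak-$*$ topology.

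The hard part, I expect, will be step (ii) together with justifying the product decomposition rigorously rather than heuristically. The factorization ``through $\bA\otimes\A$'' is asserted, but to conclude $\Spec(\uB)\cong \Spec(i_K(\bA)) \cong X'\times Y$ one must know that the completed tensor structure does not introduce extra characters or identify distinct ones — i.e. that the natural map on spectra $\Spec(\bA\otimes\A) \to \Spec(\bA)\times\Spec(\A)$ is a homeomorphism for the completion at hand. For commutative C*-algebras this is the standard fact $\Spec(C_1 \otimes C_2) \cong \Spec(C_1)\times\Spec(C_2)$, so the real obstacle is that $\bA$ is \emph{not} a C*-algebra but a ring of differential operators, and $X' = \Spec(\bA)$ is its affine variety; I would therefore need to reconcile the algebraic joint spectrum of $\bA$ (the Burchnall-Chaundy curve) with the Banach-algebraic character space of $\uB$, using the representation of Theorem \ref{rep1} to match spectral points of the operators $T_i$ with points of the curve $X'$. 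Controlling this matching uniformly over $Y$, and confirming it is a bijection onto all of $X'\times Y$ with no degeneration, is where the bulk of the genuine argument lies.
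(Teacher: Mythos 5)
Your proposal follows essentially the same route as the paper's proof: reduce the joint spectrum to the character space of $\uB$, use the density of $i_K(\bA)$ and the agreement of the weak-$*$ and hull-kernel topologies to identify characters of $\uB$ with those of the Burchnall--Chaundy image (hence with points of $X'$), and invoke compactness to upgrade the resulting continuous bijection to a homeomorphism. The only divergence is bookkeeping in the $Y$-direction, which you handle via the product decomposition of spectra coming from the tensor factorization, whereas the paper treats $Y=\{\mathrm{pt}\}$ first and then ``parametrizes by $Y$''; the difficulties you flag at the end (extending characters from the dense image, and matching the algebraic spectrum of $\bA$ with the Banach character space) are exactly the points the paper's own proof also passes over quickly.
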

\begin{proof}
Initially, it is enough to deal with the case $Y=\{\rm{pt}\}$.
Recall that each $T_i = \ul{i}_K(L_i) \in \uB$, for $1 \leq i \leq
s$. By definition of the spectrum, any point of the latter can be
regarded as a non--trivial algebra homomorphism $\uB \ovsetl{f}
\bC$, which by definition restricts to the ring of generators $\bA$,
that is, we have a restricted homomorphism $f\vert \bA: \bA
\ovsetl{f} \bC$. Note that if $f_1,f_2$ are two such homomorphisms,
then if $f_1=f_2$ on $\bA$, we have $f_1=f_2$ on $\ul{i}_K(\bA)$.

Now given a complex homomorphism on the image of $\ul{i}_K$, which
is a normed continuous ring dense in $\uB$, then viewing this as a
linear functional, it admits a closed kernel which splits the normed
ring as a topological vector space, and consequently the closure of
this kernel in $\uB$ will be a closed maximal ideal of $\uB$ which
therefore defines a complex homomorphism extending that from the
image of $\ul{i}_K$. We have then $f_1=f_2$ on $\uB$. Thus the
restriction map is a continuous bijection of the spectrum
$\sigma(\T(s), \uB)$ onto the spectrum of $\ul{i}_K$, and hence that
of $\bA$. As both spaces are compact Hausdorff spaces, this map is a
homeomorphism. The more general conclusion follows on parametrizing
by $Y$.
\end{proof}

\begin{remark} The $s$-tuple $\T(s) = (T_1, \ldots, T_s)$ of
commuting operators in $\uB$, being images $T_i = \ul{i}_K(L_i)$ of
operators that commute with $L$ of order $n$, provide a solution to
the $n$-th generalized KdV-hierarchy, simply
reproducing the construction in \cite[Proposition 4.12, Corollary
5.18, and \S6]{SW}.
\end{remark}

%%%%%%%%%%%%%%%%%%%%%%%%%%%%%%%%%%%%%%%%%%%%%%%%%%%%%%%%%%%%%%%%%%%%%%

\section{Extension of compact operators and K--homology}
\label{extension}

\subsection{Ext and KK-classes}\label{extension1}

Let us now return to the (non-singular) algebraic curve $X = X' \cup
\{x_{\infty}\}$, of genus $g_X$, associated to $\Spec(\bA)$.
Initially, we shall consider a fixed $y \in Y$ as in the case $\A
\cong \bC$, and in that case, replace $H_{\A}$ by $H = L^2(S^1,
\bC)$. Application of the BDF extension theory
\cite{BDF} shows that an extension of the compact operators $\K(H)$
by $C(X)$ yields a unital *-monomorphism to the Calkin algebra
$\Q(H)$:
\begin{equation}\label{bdf0}
\varrho : C(X) \lra \Q(H): = \cL(H)/\K(H).
\end{equation}
The group $\Ext(X)$ of these extensions is the same as the
degree-$1$ K-homology group $K^1(C(X))$ \cite{Bl,BDF}. More
generally, $K^*(C(X))$ can be identified with Kasparov's KK-group
$KK_{*}(C(X), \bC)$; we refer the reader to e.g.
\cite{Bl,Connesbook,Kasparov,Sk} for details.

The relevance of this last observation is as follows. We recall that
in \S\ref{essential} there are subspaces $W \in \Gr(q,B)$ which are
images of the Krichever correspondence (these are characterized by
the size of the ring $B_W$ in \eqref{bwalgebra}; see
\cite[Remark 6.3]{SW}). One datum of the
Krichever correspondence was a holomorphic line bundle $\cL \lra X$.
Restricting to rank-1 projections we replace $\Gr(q,B)$ by the (infinite dimensional)
projective space $\mathbb P(H_{\A})$, which can be given a Hermitian
structure together with its universal (Hermitian)
line bundle $\cL(p,A) \lra \mathbb
P(H_{\A})$ (see Appendix \S\ref{projections}). The projective space $\mathbb P(H_{\A})$
is a classifying space, so  that given a suitable holomorphic map $f
: X \lra \mathbb P(H_{\A})$, the holomorphic line bundle $\cL \cong
f^*\cL(p,A)$ admits a Hermitian structure induced by this pullback.

On such a Hermitian bundle we endow a Hermitian connection $\nabla$
(see e.g. \cite[III Theorem 1.2]{Wells}), so we now have a Hermitian
holomorphic line bundle with connection $(\cL, \nabla) \lra X$. In
particular, the $(0,1)$-component $\nabla^{''}$ of $\nabla$, is
taken to be the $\delbar$--operator on sections (see e.g. \cite[III
Theorem 2.1]{Wells}). Since $X$, regarded as a compact Riemann
surface, admits the K\"ahler property, there is a Dirac operator $D
= \sqrt{2}(\delbar + \delbar^*)$ on sections (see e.g.
\cite[(2.20)]{Roe}). Thus we are in the setting of elliptic
operators on sections of vector bundles over a compact manifold from
which KK-classes can be constructed by standard procedures
\cite{Bl,CS,Sk}.

In \S\ref{ydata}, we discussed families of $Y$-parametrized line
bundles. Now instead we fix the bundle and parametrize possible
Hermitian connections $\nabla$ on $\cL \lra X$, by $Y$. That is, we
have a family of connections $\{\nabla_y\}_{y \in Y}$, and hence a
$Y$-parametrized family of Dirac operators $\{D_y\}_{y \in Y}$. Thus
by \cite{CS,Sk} such data leads to constructing elements $u$ of the
the group $KK_{*}(C(X),C(Y)) = KK_{*}(C(X),\A)$. We summarize
matters as follows.
\begin{proposition}\label{family}
The data $\{(\cL, \nabla_y)\}_{y \in Y}$ of a Hermitian holomorphic
line bundle on $X$ with Hermitian connections parametrized by $Y$,
determines a class $u \in KK_{*}(C(X), C(Y))$ parametrized by
$\mathbb P(H_{\A}) \times Y$.
\end{proposition}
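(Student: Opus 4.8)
The plan is to assemble the class $u \in KK_{*}(C(X), C(Y))$ from the fibrewise Dirac data by verifying that the construction of a KK-cycle from an elliptic operator goes through with $C(Y)$-coefficients, continuously in the parameter $y \in Y$. First I would recall the standard single-fibre picture: for a fixed $y$, the Hermitian holomorphic line bundle with connection $(\cL, \nabla_y) \lra X$ yields the $\delbar$-operator $\nabla_y^{''}$, and hence the Dirac operator $D_y = \sqrt{2}(\delbar_y + \delbar_y^*)$ acting on $L^2$-sections of $\cL \otimes \Lambda^{0,*}$ over the compact Kähler Riemann surface $X$. Because $D_y$ is a first-order elliptic self-adjoint operator on a closed manifold, bounded transform $F_y = D_y(1+D_y^2)^{-1/2}$ together with the $C(X)$-action by multiplication on sections constitutes an odd (or graded, according to the parity conventions fixed earlier via $KK_{*}$) Fredholm module, i.e. an element of $KK_{*}(C(X), \bC)$, exactly as in \cite{CS,Sk}. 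This recovers Proposition \ref{family} in the case $Y = \{\mathrm{pt}\}$.

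Next I would promote this to a family over $Y$. The natural target Hilbert module is $\E = C(Y, \H)$, where $\H$ is the fixed $L^2$-section space of $\cL \otimes \Lambda^{0,*} \lra X$; this is a Hilbert module over $C(Y) = \A$ with the evident pointwise inner product. The operator $F = \{F_y\}_{y\in Y}$ should be shown to be an adjointable endomorphism of $\E$ with $F^2 - 1$ and $[\, \vp(a), F\,]$ lying in the compact operators $\K(\E)$ for every $a \in C(X)$, where $\vp$ denotes the diagonal $C(X)$-action $(\vp(a)\xi)(y) = a\cdot \xi(y)$. The key analytic input is that the family $y \mapsto F_y$ is norm-continuous: this is where I would invoke the fact that the connections $\nabla_y$ vary continuously in $y$ (so the symbols of $D_y$ agree and only the zeroth-order part of the operator moves continuously), whence the resolvents $(1+D_y^2)^{-1/2}$ and the bounded transforms $F_y$ depend norm-continuously on $y$ by standard perturbation estimates for elliptic operators. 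Continuity of $F$ guarantees that the fibrewise-compact operators $F_y^2 - 1$ and $[\,a, F_y\,]$ assemble into genuine elements of $\K(\E) \cong C(Y, \K(\H))$.

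I would then package $(\E, \vp, F)$ as a Kasparov $(C(X), C(Y))$-bimodule and let $u = [(\E, \vp, F)] \in KK_{*}(C(X), C(Y))$ be its class; the parametrization statement follows because the line bundle $\cL$ was itself pulled back via a classifying map $f : X \lra \mathbb P(H_{\A})$ from the universal bundle $\cL(p,A)$, and the connections $\nabla_y$ live on this pulled-back bundle, so the whole datum is organized over $\mathbb P(H_{\A}) \times Y$ exactly as asserted. The main obstacle I anticipate is the continuity/adjointability verification for the family: one must ensure that $F_y^2 - 1$ and the commutators are not merely fibrewise compact but define \emph{norm-continuous} sections, i.e. elements of $\K(\E)$ rather than just a measurable field; this is precisely the hypothesis that the connections are parametrized \emph{continuously} by $Y$, and I would make this explicit rather than treat it as automatic. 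The remaining algebraic bimodule axioms (grading compatibility in the even case, self-adjointness of $F$ modulo compacts) are routine once continuity is in hand.
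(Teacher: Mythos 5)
Your proposal is correct and follows essentially the same route as the paper: the paper constructs the class by forming the fibrewise Dirac operators $D_y = \sqrt{2}(\delbar + \delbar^*)$ from the connections $\nabla_y$ and then invokes the standard procedure of \cite{CS,Sk} for turning a continuous family of elliptic operators over $X$ parametrized by $Y$ into an element of $KK_{*}(C(X),C(Y))$, with the $\mathbb P(H_{\A})$-parametrization coming from the classifying map $f$ exactly as you describe. Your write-up merely makes explicit the Kasparov bimodule $(C(Y,\H),\vp,F)$ and the norm-continuity of $y\mapsto F_y$ that the paper leaves to the cited references, which is a reasonable and accurate unpacking rather than a different argument.
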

In particular, the degree-$1$ component $u^{[1]}$ provides an
element of the group $\Ext(C(X), \A) \cong KK_1(C(X), \A)$, and in
the special case $\A = \bC$, an element of $\Ext(X) = K^1(C(X))$.

\begin{remark} The Krichever quintuple yields the data
of Proposition \ref{family} with $Y$ reduced to one point. In
particular, we can associate to any point of the Grassmannian in the
image of the Krichever map and to a given Hermitian structure over
the universal bundle a class $u$.
\end{remark}

\subsection{Extensions by the Burchnall-Chaundy C*
algebra}\label{extension2}

We recall the C*-Burchnall-Chaundy algebra $\uA$ and consider a
more general situation than \S\ref{extension1} using the
Hilbert module $H_{\A}$ and the ring $\bA$.
Concerning the spectrum $\Spec(\Q(H))$ of the Calkin algebra, we
refer to \cite{Dix} for the study of spectra of (quotient)
C*-algebras.

\begin{theorem} ~
\begin{itemize}
\item[(1)] With respect to the subring inclusion \eqref{inclusion2}
induced by the integral operator $K$ (see \eqref{integralop}),
%each element $L \in \bA$ determines
there exists an extension of the compact operators $\K(H_{\A})$ by the
Burchnall-Chaundy C*-algebra $\uA$. Moreover, there also exists a
well-defined map $\ul{\varrho}_K : \bA \lra \Q(H_{\A})$.

\item[(2)] Recalling $X'= \Spec(\bA)$, there exists a well-defined map
of spectra
\begin{equation}
\Spec(\Q(H_{\A})) \cong \Spec(\Q(H)) \times Y \lra X' \times Y.
\end{equation}
\end{itemize}
\end{theorem}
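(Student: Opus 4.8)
The plan is to obtain the extension in (1) as the pull--back of the Calkin sequence along the tautological representation of $\uA$, and to obtain the map of spectra in (2) by fibering over $Y$ via the central subalgebra $\A$ and then restricting to the commutative image of $\uA$ in the Calkin algebra.

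For (1), recall from Definition \ref{bcalgebra} that $\uA$ is a separable unital C*-subalgebra of $\Acl \subset \cL_{\A}(H_{\A})$, so the inclusion $j : \uA \hookrightarrow \cL_{\A}(H_{\A})$ is at hand. Composing $j$ with the Calkin quotient $\pi : \cL_{\A}(H_{\A}) \lra \Q(H_{\A}) = \cL_{\A}(H_{\A})/\K(H_{\A})$ yields a $*$-homomorphism $\tau := \pi \circ j : \uA \lra \Q(H_{\A})$, which I take as the Busby invariant. The associated extension is the pull--back C*-algebra $E := \pi^{-1}(\tau(\uA)) = \uA + \K(H_{\A})$, sitting in
\begin{equation}
0 \lra \K(H_{\A}) \lra E \lra \uA \lra 0 .
\end{equation}
The map $\ul{\varrho}_K$ is then the composite $\bA \ovsetl{\ul{i}_K} \uA \ovsetl{\tau} \Q(H_{\A})$; it is well defined because $\ul{i}_K$ of \eqref{inclusion2} and $\tau$ both are, and it records in the Calkin algebra the classes of the generators $T_i = \ul{i}_K(L_i)$.

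The only point in (1) that is not formal is that the quotient of $E$ be $\uA$ itself, i.e.\ that $\tau$ be faithful, equivalently $\uA \cap \K(H_{\A}) = \{0\}$. Here I would use the Krichever/Segal--Wilson description underlying $i_K$: after conjugation by the integral operator $K$ of \eqref{integralop} the generators act as multiplication by the (non--constant) coordinate functions of the spectral curve, so under the homeomorphism $\Spec(\uB) \cong X'$ of Theorem \ref{spectra} (taken with $Y = \{{\rm pt}\}$) the commutative subalgebra $\uB$ is realised with purely continuous joint spectrum. Since a nonzero multiplication operator against a non--atomic measure is never compact, $\uB \cap \K(H_{\A}) = \{0\}$; as the same description shows the generators to be essentially normal and essentially commuting, the image $\pi(\uA) = \tau(\uA)$ is in fact a \emph{commutative} C*-subalgebra $\Ce \subset \Q(H_{\A})$ disjoint from the compacts, whence $\tau$ is a monomorphism.

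For (2), the product structure of the source is forced by the commutativity of $\A \cong C(Y)$: as operators in $\cL_{\A}(H_{\A})$ are $\A$-linear, the subalgebra $\A$ is central in $\cL_{\A}(H_{\A})$, hence in $\Q(H_{\A})$, so any irreducible representation of $\Q(H_{\A})$ sends $\A$ to the scalars, i.e.\ to evaluation at a single $y \in Y$, and then factors through the $y$-fibre $\Q(H)$. This gives the homeomorphism $\Spec(\Q(H_{\A})) \cong \Spec(\Q(H)) \times Y$ and simultaneously reduces the construction of the map to $X' \times Y$ to the scalar case $\A = \bC$, exactly as in the reduction used in the proof of Theorem \ref{spectra}. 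In that case the target coordinate is produced from the commutative image $\Ce = \overline{\ul{\varrho}_K(\bA)} \subset \Q(H)$: since $\tau$ is faithful and $\Ce$ is the image of $\uB$, Theorem \ref{spectra} passed to the quotient gives $\Spec(\Ce) \cong X'$, and I would define the map by restricting the data of a point of $\Spec(\Q(H))$ to $\Ce$ and reading off the resulting point of $\Spec(\Ce) \cong X'$. Reassembling over $Y$ yields the asserted map $\Spec(\Q(H_{\A})) \lra X' \times Y$.

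The hard part is precisely this last step: upgrading the restriction to $\Ce$ into a genuine single-valued map $\Spec(\Q(H)) \lra \Spec(\Ce) \cong X'$. The obstruction is structural. Because $\Q(H)$ is simple, its primitive ideal space is a single point, so the pull--back of primitive ideals collapses and cannot see $X'$; one is forced to work with Dixmier's spectrum of irreducible representation classes \cite{Dix} rather than with ${\rm Prim}$. But the restriction to $\Ce$ of an irreducible representation of $\Q(H)$ is only a representation of the commutative algebra $\Ce \cong C(X')$, i.e.\ a spectral measure on $X'$, and is a single character exactly when that measure is a point mass; since $\Ce$ is \emph{not} central in $\Q(H)$, single-valuedness is the point that must be forced. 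I expect to do so using the essential normality and essential commutativity established in (1) together with the BDF description \cite{BDF} of the extension class, restricting attention to those representations compatible with $E$, for which the tuple $\T(s)$ acts with one-dimensional joint spectrum. Granting this, continuity for the hull--kernel topologies and the agreement of the two $Y$-coordinates follow fibrewise from the centrality of $\A \cong C(Y)$, completing the proof.
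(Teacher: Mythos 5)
Your treatment of part (1) follows the paper's route but is more careful. The paper's proof simply records the identification $\Q(H_{\A})\cong \Q(H)\otimes\A$ of \eqref{calkin1}, invokes BDF theory to assert that an extension of $\K(H_{\A})$ by $\uA$ yields a unital *-monomorphism $\ul{\varrho}:\uA\to\Q(H_{\A})$, and defines $\ul{\varrho}_K=\ul{\varrho}\circ\ul{i}_K$ exactly as you define $\tau\circ\ul{i}_K$. What you add --- the explicit pull-back extension $E=\uA+\K(H_{\A})$ and the verification that $\uA\cap\K(H_{\A})=\{0\}$ via the continuous joint spectrum of Theorem \ref{spectra}, so that the quotient of $E$ really is $\uA$ and $\tau$ is a faithful Busby invariant --- is not addressed in the paper at all, and it is precisely the content needed to say that an extension \emph{by} $\uA$ (rather than by $\uA/(\uA\cap\K)$) exists. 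That part of your argument is sound and an improvement.

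The gap is in part (2), at the step you yourself flag as "the hard part," and you should be aware that the paper does not close it either: its entire argument for the arrow $\Spec(\Q(H_{\A}))\cong\Spec(\Q(H))\times Y\lra X'\times Y$ is an appeal to "the contravariance of the `Spec' functor" applied to $\ul{\varrho}_K:\bA\to\Q(H_{\A})$. Your analysis of why this is delicate is correct: $\Q(H)$ is simple, so pulling back primitive ideals along $\ul{\varrho}_K$ yields only the zero ideal of the integral domain $\bA$ (the generic point of $X'$, not a closed point), and restricting an irreducible representation of $\Q(H)$ to the non-central commutative image of $\bA$ produces a spectral measure on $X'$ rather than a character. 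Contravariance of $\Spec$ is functorial for commutative algebras, and for the Dixmier spectrum it is not available for an arbitrary morphism out of a noncommutative target; so the single-valued assignment $\Spec(\Q(H))\to X'$ must be constructed, not invoked. Your proposed repair --- restricting to representations compatible with the extension $E$, where the tuple $\T(s)$ acts with one-dimensional joint spectrum --- is plausible but is left as a promissory note, so your proof of (2) is incomplete; it is only fair to add that the paper's own one-line justification does not supply the missing construction either.
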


\begin{proof}
Firstly, in relationship to $H_{\A}$, the Calkin C*-algebra
$\Q(H_{\A})$ can be expressed as follows:
\begin{equation}\label{calkin1}
\begin{aligned}
\Q(H_{\A}) &= \cL(H_{\A})/\K(H_{\A})\\
         &\cong \cL(H \otimes \A)/\K(H \otimes \A)\\
        &\cong (\cL(H)\otimes \A)/(\K(H) \otimes \A)\\
        & \cong (\cL(H)/\K(H))\otimes \A \\
        &= \Q(H) \otimes \A.
\end{aligned}
\end{equation}
Again, an application of the BDF extension theory
\cite{BDF} shows that an extension of the compact operators
$\K(H_{\A})$ by $\uA$, yields a unital *-monomorphism
\begin{equation}\label{bdf1}
\ul{\varrho} : \uA \lra \Q(H_{\A}).
\end{equation}
We also recall that a Fredholm module $(H_{\A}, S)$ over $\uA$
(where $S$ an essentially unitary operator)
determines an element in K--homology $K^*(\uA)$ \cite{Bl,BDF}.
 Combining the morphism $\ul{\varrho}$ in
\eqref{bdf1} with the inclusion \eqref{inclusion2} induced by the
integral operator $K$ in \S\ref{integral-op}, we produce a well defined
 map from the ring
$\bA$ to $\Q(H) \otimes \A$ viewed as the composition
\begin{equation}\label{bdf2}
\ul{\varrho}_K : \bA \lra \bA \otimes \A \ovsetl{\ul{i}_K} \uA
\ovsetl{\ul{\varrho}}
 \Q(H_{\A}).
\end{equation}
%thus showing  that each element $L \in \bA$ determines an extension
%of $\K(H_{\A})$ by $\uA$.
Furthermore, using \eqref{calkin1},
\eqref{bdf2} and the contravariance of the `Spec' functor, we have
a well-defined map
\begin{equation}
\begin{aligned}
\varrho_K^* &: \Spec(\Q(H_{\A})) \cong \Spec(\Q(H) \otimes \A)
\\ &\cong \Spec(\Q(H)) \times Y \lra (X' = \Spec(\bA)) \times Y.
\end{aligned}
\end{equation}
\end{proof}

\subsection{The C*-algebra of the Jacobian and extensions}

We return now to the Jacobian torus $J(X)$ of $X$ and recall that
there exists a holomorphic embedding (the Abel map) $\mu: X \lra
J(X)$ (see e.g.\cite{GH}). The following theorem establishes a
relationship between the respective commutative C*-algebras of
continuous functions $C(J(X))$ and $C(X)$ and the dual
$K_{*}$-functor:

\begin{theorem}\label{SES}
There exists a short exact sequence of C*-algebras
\begin{equation}\label{exact1}
0 \ra \mathfrak{I} \ovsetl{i} C(J(X)) \ovsetl{p} C(X) \ra 0,
\end{equation}
where $\mathfrak{I}$ is a two-sided ideal in $C(J(X))$, $i$ is
injective and $p$ is surjective. Furthermore, \eqref{exact1} induces
a periodic sequence of $K_{*}$--groups:
\begin{equation}\label{period1}
\xymatrix@C=4pc{K_0(\mathfrak{I})\ar[r]^{i_{*}} & K_{0}(C(J(X)))
\ar[r]^{p_{*}} & K_0(C(X)) \ar[d]\\ K_1(C(X)) \ar[u] & K_1(C(J(X)))
\ar[l]_{p_{*}} & K_1(\mathfrak{I})\ar[l]_{i_{*}} }
\end{equation}
\end{theorem}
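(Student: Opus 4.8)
The plan is to identify \eqref{exact1} with the canonical restriction sequence attached to the closed embedding of $X$ into $J(X)$, and then to feed the resulting C*-algebra extension into the standard six-term cyclic exact sequence of $K$-theory.

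First I would invoke the fact that the Abel map $\mu : X \lra J(X)$ is injective --- indeed a holomorphic embedding once $g_X \geq 1$ (see e.g.\ \cite{GH}). Since $X$ is compact and $J(X)$ is Hausdorff, the image $\mu(X)$ is compact, hence closed in $J(X)$, and the continuous bijection $X \lra \mu(X)$ is automatically a homeomorphism. By Gelfand duality $\mu$ therefore induces the pullback $*$-homomorphism
$$ p := \mu^* : C(J(X)) \lra C(X), \qquad p(f) = f \circ \mu , $$
and the homeomorphism $X \cong \mu(X)$ lets us read $p$ as the restriction map $C(J(X)) \lra C(\mu(X))$. Surjectivity of $p$ is then exactly the Tietze extension theorem, applicable because $J(X)$ (a real torus) and its closed subset $\mu(X)$ are compact metric, hence normal: every continuous function on $\mu(X)$ extends continuously to $J(X)$.

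Next I would set $\mathfrak{I} := \ker p = \{ f \in C(J(X)) : f|_{\mu(X)} = 0 \}$, which is a norm-closed ideal; it is automatically two-sided since $C(J(X))$ is commutative, and it is canonically isomorphic to $C_0(J(X) \setminus \mu(X))$. Taking $i$ to be the inclusion of this ideal yields the short exact sequence \eqref{exact1} of C*-algebras with $i$ injective and $p$ surjective. For the second assertion, \eqref{period1} is nothing but the six-term cyclic exact sequence in $K$-theory functorially associated to any short exact sequence of C*-algebras, the two vertical arrows being the index and exponential connecting homomorphisms; I would simply cite the standard result (e.g.\ \cite{Bl}).

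I expect the only substantive step to be the first: verifying that $\mu$ is injective with closed image, so that Gelfand duality converts the geometry into an honest surjection of commutative C*-algebras. This is precisely where the hypothesis $g_X \geq 1$ is used, and it is imported from the classical theory of the Jacobian. Everything downstream --- surjectivity via Tietze, the identification of $\mathfrak{I}$, and the passage to the $K$-theory hexagon --- is formal, and I would anticipate no difficulty in the $K$-theoretic half of the argument.
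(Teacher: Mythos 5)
Your argument is correct and follows essentially the same route as the paper's own proof: identify $X$ via the Abel map as a closed subset of $J(X)$, obtain surjectivity of the restriction map $p$ on continuous functions (the paper says ``by standard principles,'' which you correctly flesh out as Tietze), take $\mathfrak{I}=\ker p$, and invoke the six-term periodicity sequence in $K$-theory. The paper also sketches a secondary, alternative justification via the transcendence degrees of $\Mero(J(X))$ and $\Mero(X)$, but its primary argument coincides with yours, and your version is in fact the more carefully detailed of the two.
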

\begin{proof}
Firstly, from the embedding $\mu: X \lra J(X)$ we identify $X$ as a
closed subset of $J(X)$ and by standard principles, the induced map
on continuous functions $p : C(J(X)) \lra C(X)$ is surjective.
Alternatively, we note that the transcendence degrees of the rings
of meromorphic functions $\Mero(J(X))$ and $\Mero(X)$ are
 $g_X \geq 1$ and $1$, respectively. Thus with respect to a
two-sided ideal $\mathfrak{I}_0$ in $\Mero(J(X))$ we have a short
exact sequence:
\begin{equation}
0 \ra \mathfrak{I}_0 \ovsetl{i} \Mero(J(X)) \ovsetl{p} \Mero(X) \ra
0 ,
\end{equation}
where in each case the elements of the ring can be approximated by
Laurent polynomials extendable to the continuous functions. Hence
\eqref{exact1} follows. The last part follows from the periodicity
of the $K_{*}$-functor \cite{Connesbook,Fill}.
\end{proof}

The periodicity sequence in \eqref{period1} corresponds to the
analogous sequence in the K-theory of spaces:
\begin{equation}
\xymatrix@C=4pc{K^0(J(X)/ \IM~\mu) \ar[r] & K^0(J(X)) \ar[r] &
K^0(X) \ar[d]\\ K^1(X)\ar[u] & K^1(J(X)) \ar[l] & K^1(J(X)/
\IM~\mu)\ar[l] }
\end{equation}

Now we return to K-homology and in particular, in degree $1$ the
following establishes a relationship between the `Ext' classes of
$X$ and $J(X)$ on introducing the homology Chern character
\cite{Baum,Connesbook}. Note that this is the case in which the BDF
theory guarantees that elements of `$\Ext$' correspond to
*-monomorphisms of the C*-algebra to the Calkin algebra (see
\eqref{bdf0}, \eqref{bdf1} and \eqref{bdf2}). Further, on recalling
the algebra $B_W$ in \eqref{bwalgebra} with respect to $W \in
\Gr(p,A)$, the set of monomorphisms of the Burchnall-Chaundy algebra
into the algebras $B_W \subset\mathcal{L}(H_{\A})$ is indeed the
Jacobian $J(X)$, as proved by Burchnall and Chaundy and formalized
in \cite{Krich1,SW}. It is from this point of view that $J(X)$ can be
mapped to elements of `$\Ext$'. We make matters more precise by the
following:

\begin{theorem}\label{map}
There exists a well-defined map $\Phi: J(X) \lra \Ext(\uA)$ which
assigns to each morphism $\vp_W : \bA \lra B_W$, an extension of the
compact operators $\K(H_{\A})$ by $\uA$, with respect to
subspaces $W \in \Gr(p,A)$, and the subring inclusion
\eqref{inclusion2} induced via the integral operator $K$.
\end{theorem}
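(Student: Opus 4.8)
The plan is to build $\Phi$ by composing the Burchnall--Chaundy parametrization of $J(X)$ with passage to the Calkin algebra, and then to read off an extension class via the BDF correspondence. First I would assemble the two ingredients already in place. By the extension theorem of \S\ref{extension2} there is a unital $*$-monomorphism $\ul{\varrho} : \uA \lra \Q(H_{\A})$ (see \eqref{bdf1}), and by Definition \ref{bcalgebra} the algebra $\uA$ is the C*-closure in $\Acl$ of the image $\ul{i}_K(\bA)$ of \eqref{inclusion2}. On the other side, the Burchnall--Chaundy--Krichever classification (formalized in \cite{Krich1,SW}) identifies the points of $J(X)$ with the monomorphisms $\vp_W : \bA \lra B_W$ as $W$ ranges over the relevant subspaces of $\Gr(p,A)$, two subspaces giving the same point precisely when the associated line bundles are isomorphic.

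Next I would define $\Phi$ on a representative. Given a point of $J(X)$ presented by $\vp_W : \bA \lra B_W$, consider the composite
\begin{equation}
\bA \ovsetl{\vp_W} B_W \hookrightarrow \cL(H_{\A}) \ovsetl{\pi} \Q(H_{\A}),
\end{equation}
where $\pi$ is the quotient onto the Calkin algebra. Since $B_W \subset A$ lies in the restricted pre-C*-algebra and $\pi$ is norm-decreasing for the inherited C*-norm, the composite $\pi \circ \vp_W$ is a $*$-homomorphism on $\bA$ that is continuous in the C*-norm of $\Acl$; because $\uA$ is the C*-closure of $\ul{i}_K(\bA)$ and the two maps agree on the commuting generators $T_i = \ul{i}_K(L_i)$, the map $\pi \circ \vp_W$ extends uniquely to a $*$-homomorphism $\uA \lra \Q(H_{\A})$. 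I would then set $\Phi([\vp_W])$ to be the class of the resulting extension in $\Ext(\uA)$.

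The substantive verifications are essentiality and well-definedness. For the former I would argue that the extended map is a $*$-monomorphism: the generators map injectively onto the joint spectrum $X'$ by Theorem \ref{spectra}, and $B_W$ agrees with its leading (symbol) part modulo lower-order, hence compact, terms, so the kernel of the composite is exactly $\K(H_{\A})$; this is what lets BDF attach a genuine element of $\Ext(\uA) \cong K^1(\uA)$. For well-definedness I would show that two subspaces $W, W'$ representing the same point of $J(X)$ yield the same class: isomorphic line bundles produce conjugate morphisms $\vp_W, \vp_{W'}$, and conjugation by the implementing unitary descends to an inner automorphism of $\Q(H_{\A})$, which leaves the $\Ext$-class unchanged.

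The main obstacle I anticipate is the essentiality step, namely proving that the induced map on $\uA$ is injective modulo compacts rather than collapsing part of the algebra. This requires controlling how far $\vp_W$ departs from a genuine $*$-representation (the operators in $B_W$ need only be adjointable, not normal) and confirming that the discrepancy, together with the failure of $\vp_W$ to be exactly involution-preserving, lands in $\K(H_{\A})$; here Theorem \ref{spectra}, identifying the joint spectrum with $X'$, is the key input that rules out degeneration. The remaining continuity and well-definedness points are then routine consequences of the pre-C*-algebra structure of $A$ and the Krichever identification of $J(X)$.
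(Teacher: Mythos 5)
Your construction is essentially the paper's own proof: the paper defines $\Phi$ by sending $\vp_W$ to the extension determined by $\ul{\varrho}_K = (\Pi\vert_{B_W}) \circ \vp_W$, i.e.\ by composing $\vp_W$ with the quotient onto the Calkin algebra $\Q(H_{\A})$, exactly as you propose. The extra verifications you flag --- extending the map from the dense image of $\bA$ to all of $\uA$, checking essentiality, and checking independence of the representative $W$ --- are legitimate concerns, but the paper's proof does not address them either, so your argument matches and in fact goes somewhat beyond the published one.
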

\begin{proof}
In order to construct $\Phi$ we first assign
to a point of $J(X)$, a morphism
$\vp_W : \bA \lra B_W$. Using the compact extensions map
$\ul{\varrho}_K$ in \eqref{bdf2} and the fact that $B_W \subset
\cL(H_{\A})$, the desired map is that for which the diagram
\begin{equation}
\xymatrix@C=2pc{\bA \ar[dr]_{\ul{\varrho}_K} \ar[r]^{\vp_W} & B_W
\subset \cL(H_{\A}) \ar[d]^{\Pi} \\ &\cL(H_{\A})/\K(H_{\A}) =
\Q(H_{\A})}
\end{equation}
commutes on restricting the projection $\Pi$ to $B_W$. Note that the
subring inclusion \eqref{inclusion2} is implicit in the definition
of $\ul{\varrho}_K$ in \eqref{bdf2}. More specifically, $\Phi: \bA
\lra \Ext(\uA)$ is determined via the assignment $\vp_W \mapsto
\ul{\varrho}_K$ for which $\ul{\varrho}_K = (\Pi\vert_{B_W}) \circ
\vp_W$ defines an extension of $\K(H_{\A})$ by $\uA$. ~$\Box$
\end{proof}

In diagram \eqref{chern-1} below, the vertical maps `$\ch_1$' denote
the (degree $1$) homology Chern character morphisms (see e.g.
\cite{Baum,Connesbook}).
\begin{theorem}\label{commutative}
The following diagram is commutative
\begin{equation}\label{chern-1}
\begin{CD}
K^1(C(X))  @> \hat{\mu}_{*}>> K^1(C(J(X)))
\\ @V \ch_1 VV   @VV \ch_1 V \\ H_1(X, \bQ) @>\mu_{*} \cong >>
H_1(J(X), \bQ)
\end{CD}
\end{equation}
Here the map $\mu_{*}$ is an isomorphism and the map $\hat{\mu}_{*}$
is injective. Equivalently, the map $\hat{\mu}_{*}: \Ext(X) \lra
\Ext(J(X))$, is injective. In particular, when the genus $g_X =1$,
the map $\hat{\mu}_{*}$ is an isomorphism.
\end{theorem}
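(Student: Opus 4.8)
The plan is to establish commutativity of the diagram \eqref{chern-1} and then read off the injectivity statements from the behaviour of the bottom row. First I would analyze the bottom map $\mu_*: H_1(X,\bQ) \lra H_1(J(X),\bQ)$. The Abel map $\mu: X \lra J(X)$ is the standard holomorphic embedding, and it is classical (see e.g. \cite{GH}) that it induces an isomorphism on first (rational) homology: indeed $J(X) = H^0(X,\Omega^1)^*/\Lambda$ is a complex torus of real dimension $2g_X$, so $H_1(J(X),\bZ) \cong \Lambda \cong H_1(X,\bZ)$, and $\mu_*$ realizes exactly this identification. Thus $\mu_*$ is an isomorphism on $H_1(-,\bQ)$, which gives the first asserted fact essentially for free.

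Next I would treat the vertical Chern character maps. By the Baum-Douglas / Connes picture of K-homology \cite{Baum,Connesbook}, $\ch_1$ is a natural transformation from degree-$1$ K-homology $K^1(C(-)) = K_1^{\mathrm{top}}(-)$ to rational homology $H_1(-,\bQ)$, and on a space of the homotopy type of a finite CW complex it is a rational isomorphism in each degree after summing. For a compact Riemann surface $X$ of genus $g_X$ and for the torus $J(X)$, the odd-degree K-homology is torsion-free and $\ch_1$ is injective with image a full lattice rationally; this is the content I would invoke to compare the top and bottom rows. Naturality of $\ch_1$ with respect to the continuous (indeed holomorphic) map $\mu$ is precisely what forces the square to commute: $\ch_1 \circ \hat{\mu}_* = \mu_* \circ \ch_1$, where $\hat{\mu}_* = \mu_*$ on K-homology is the covariant wrong-way or pushforward map induced by $\mu$ (here $\mu$ is an embedding, so the covariant K-homology map is just the induced map on $K_*(C(-))$ via the surjection $p: C(J(X)) \lra C(X)$ of Theorem \ref{SES}).

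With the square commutative and $\mu_*$ a rational isomorphism, injectivity of $\hat{\mu}_*$ follows by a diagram chase: if $\hat{\mu}_*(\xi) = 0$ then $\mu_*(\ch_1 \xi) = \ch_1(\hat{\mu}_* \xi) = 0$, so $\ch_1 \xi = 0$ by injectivity of $\mu_*$, and since $\ch_1$ is injective on the torsion-free group $K^1(C(X))$ we get $\xi = 0$. The identification $K^1(C(X)) \cong \Ext(X)$ and $K^1(C(J(X))) \cong \Ext(J(X))$ via BDF \cite{BDF} then translates this into the injectivity of $\hat{\mu}_*: \Ext(X) \lra \Ext(J(X))$. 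For the genus-one case, $X$ is itself a torus and $\mu$ is an isomorphism of Riemann surfaces, so every map in the diagram is an isomorphism and hence $\hat{\mu}_*$ is an isomorphism.

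I expect the main obstacle to be pinning down the precise sense in which $\ch_1$ is \emph{injective} (rather than merely rationally injective) on these particular spaces, and correspondingly making sure the integral K-homology groups carry no torsion that $\ch_1$ could kill. For $X$ a Riemann surface and $J(X)$ a torus the relevant groups are torsion-free, so I would want to state this carefully — perhaps by computing $K^1(C(X)) \cong \bZ^{2g_X}$ and $K^1(C(J(X))) \cong \bZ^{2^{2g_X - 1} \cdot 2g_X}$ (the odd K-homology of the $2g_X$-torus) directly, and checking that $\ch_1$ lands injectively in $H_1(-,\bQ)$. A secondary subtlety is confirming that $\hat{\mu}_*$ as defined in the theorem (the K-homology pushforward dual to $p$ from Theorem \ref{SES}) is genuinely the map making the square commute under $\ch_1$; this is a matter of naturality of the homology Chern character, but I would verify that the covariant functoriality conventions for K-homology match those used in the bottom row.
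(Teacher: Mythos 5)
Your argument is correct, but it reaches the injectivity of $\hat{\mu}_{*}$ by a genuinely different route than the paper. The paper reads injectivity of $\hat{\mu}_{*}\colon K^1(C(X))\to K^1(C(J(X)))$ directly off the six-term exact sequence in K-homology associated to the extension \eqref{exact1} of Theorem \ref{SES}, and only then uses naturality of $\ch_1$ to get commutativity of the square; the Chern character plays no role in its injectivity argument. You instead take commutativity (naturality of $\ch_1$) as the primary input and deduce injectivity by a diagram chase, using that $\mu_{*}$ is an isomorphism on $H_1(-,\bQ)$ and that $\ch_1$ is injective on $K^1(C(X))\cong\bZ^{2g_X}$ because this group is torsion-free and $\ch_1$ is a rational isomorphism for a compact surface. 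Your route is arguably the more robust one: the paper's claim that the long exact sequence ``yields an injective map'' is not automatic (one must know that the map into $K^1(C(X))$ from the preceding term of the six-term sequence, i.e.\ the image of $K_0(\mathfrak{I})$, vanishes), whereas your chase needs only the classical facts you cite. The identification of $\hat{\mu}_{*}$ with the map induced by the surjection $p$ of Theorem \ref{SES}, which you flag as a subtlety, does match the paper's convention. One small correction to your parenthetical computation: the odd K-homology of the $2g_X$-torus has rank $2^{2g_X-1}$ (the number of odd-degree exterior generators), not $2^{2g_X-1}\cdot 2g_X$ --- e.g.\ for $g_X=1$ it is $\bZ^2$, not $\bZ^4$; this does not affect your argument, since only the torsion-freeness of $K^1(C(X))$ and the injectivity of $\ch_1$ on it are actually used.
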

\begin{proof}
Applying the functor $K^*$ to \eqref{exact1}, the resulting long
exact sequence yields an injective map $\hat{\mu}_{*}: K^1(C(X))
\lra K^1(C(J(X)))$. The commutativity of the diagram arises from
applying the homology Chern character homomorphism to each side.
Regarding the lower horizontal arrow $\mu_{*}$, we recall some
elementary facts concerning $X$ and its Jacobian $J(X)$ (see e.g.
\cite{GH}). Setting the genus $g_X= g$, if $\delta_1, \ldots,
\delta_{2g}$ are 1--cycles in $X$ forming a (canonical) basis for
$H_1(X, \bZ)$, then $H_1(X, \bZ) \cong \bZ\{\delta_1, \ldots,
\delta_{2g}\}$. Next, we identify $J(X) = \bC^{g}/\Lambda$ where
$\Lambda \subset \bC^{g}$ is a discrete lattice of maximal rank
$2g$. Accordingly, $H_1(J(X), \bZ) \cong \Lambda \cong
\bZ\{\lambda_1, \ldots, \lambda_{2g}\}$, for a basis $\lambda_1,
\ldots, \lambda_{2g}$ for $\Lambda$. Thus, from the (one-to-one)
assignment $\delta_i \mapsto \lambda_i,~1 \leq i \leq 2g$, it
follows that we have an isomorphism $H_1(X, \bZ) \cong H_1(J(X),
\bZ)$. Since both $X$ and $J(X)$ are quotients of their respective
covering spaces by torsion-free discrete groups, then on tensoring
the respective integral homology groups by $\bQ$, it also follows
that $H_1(X, \bQ) \cong H_1(J(X), \bQ)$. In the case of genus
$g_X=1$, $X$ is, up to the choice of a base point,  an elliptic
curve (a complex 1-dimensional torus), the map $\mu$ is an
isomorphism and consequently induces an isomorphism $\hat{\mu}_{*}:
K^1(C(X)) \lra K^1(C(J(X)))$, in other words, $\Ext(X) \cong
\Ext(J(X))$.
\end{proof}

%%%%%%%%%%%%%%%%%%%%%%%%%%%%%%%%%%%%%%%%%%%%%%%%%%%%%%%%%%%%%%%%%%%%%%

\subsection{The flow of multiplication operators
and the $\tau$-function}\label{multflow}

%\begin{definition}\label{cdef1}
%Let $\Gamma_{+}(Y) \subset \what{G}(A)$ be the group of
%multiplication operators on $H_{\A}$ given by the group of maps $g:
%S^1 \times Y \lra \bC^*$ such that $g$ is continuous in $y$ for each
%$y \in Y$, $g(x,y)$ is real analytic in $x \in S^1$ extending to
%$g(z,y)$ holomorphic in $z \in D$, and $g(0, y) = 1$, for each $y
%\in Y$.
%\end{definition}

The action on $\Gr(p,A)$ of the group of multiplication operators ${\Gamma}_{+}(\A)$
(see Appendix \ref{essential}) is induced via its
restriction to the subspace $\hp$ in any polarization. Here we
start by considering
certain elements of $\Gamma_{+}(\A)$ on which an operator-valued
$\tau$-function can be defined.

%\begin{definition}\label{cdef2}
%Let $\Gamma_{-}(Y) \subset \what{G}(A)$ be the group of
%multiplication operators on $H_{\A}$ given by the group of maps $g:
%\bC\backslash \Int(D) \times Y \lra \bC^*$ such that $g$ is
%continuous in $y$ for each $y \in Y$, $g(x,y)$ is real analytic in
%$x \in \bC \backslash \Int(D)$ extending to $g(z,y)$ holomorphic in
%$z \in \bC \backslash \Int(D)$, and $g(\infty, y) = 1$, for each $y
%\in Y$.
%\end{definition}

\begin{definition}
We say that $W \in \Gr(p,A)$ is \emph{transverse} if it is the graph
of a linear
operator $C: \hp \lra \hm$.
\end{definition}

Consider the element $q_{\zeta} \in {\Gamma}_{+}(\A)$ given by a
map $q_{\zeta}(z)= (1- z\zeta^{-1})$, for $\vert \zeta \vert > 1$,
whose inverse
is given by
\begin{equation}\label{q-inverse}
q_{\zeta}^{-1} = \{ \bmatrix a & b \\ 0 & d \endbmatrix : a
\in \Fred(\hp),~ d \in \Fred(\hm),~ b \in \K(H_{\A}) ~(\text{in fact},~ b
\in \cL_2(\hm,\hp)) \}.
\end{equation}
We define the \emph{operator-valued $\tau$-function}
\begin{equation}\label{tau-1}
\tau_W: {\Gamma}_{+}(\A) \lra \bC \otimes 1_{\A},
\end{equation}
relative to a transverse subspace $W \in \Gr(p,A)$ by
\begin{equation}\label{tau-2}
\tau_W(q_{\zeta}) = \det(1 + a^{-1}b C) \otimes {1}_{\A},
\end{equation}
where $C: \hp \lra \hm$ is the map whose graph is $W$.
The following lemma (and its proof) is essentially
that of \cite[Lemma 5.15]{SW}:
\begin{lemma}\label{unique-lemma}
Let $W \in \Gr(p,A)$ be transverse and let $f_0$ be the
unique element of $\hm$ such that
$1 + f_0 \in W$. Then for $\vert \zeta \vert > 1$, we have
$\tau_W(q_{\zeta}) = 1 + f_0(\zeta)$.
\end{lemma}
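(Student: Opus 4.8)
The plan is to follow the strategy of \cite[Lemma 5.15]{SW}, first reducing to the scalar case $\A \cong \bC$ (so that $H_{\A} = H = L^2(S^1,\bC)$, exactly as in the proof of Theorem \ref{spectra}) and then parametrizing the conclusion by $Y$. The entire computation rests on making the block entries $a$ and $b$ of $q_{\zeta}^{-1}$ in \eqref{q-inverse} explicit. Using the expansion $q_{\zeta}^{-1}(z) = \sum_{n\geq 0}\zeta^{-n}z^{n}$, valid for $|z|\leq 1 < |\zeta|$, I would compute the action of the multiplication operator $q_{\zeta}^{-1}$ on the standard basis $\{z^{n}\}$ and compress it to the polarization $H = \hp\oplus\hm$.

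First I would record that $a(z^{0}) = \sum_{n\geq 0}\zeta^{-n}z^{n}$ and, for $k\geq 1$,
\begin{equation}
b(z^{-k}) = P_{+}\Bigl(\sum_{n\geq 0}\zeta^{-n}z^{n-k}\Bigr) = \zeta^{-k}\sum_{m\geq 0}\zeta^{-m}z^{m} = \zeta^{-k}\,a(z^{0}),
\end{equation}
where $P_{+}$ is the projection onto $\hp$. The crucial structural observation is that $b$ therefore has one-dimensional range: writing $f = \sum_{k\geq 1}c_{k}z^{-k}\in\hm$ and reading $f(\zeta) = \sum_{k\geq 1}c_{k}\zeta^{-k}$ as the value at $\zeta$ of the function $f$ (which extends holomorphically to $|z|>1$), the identity above gives $b(f) = f(\zeta)\,a(z^{0})$. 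Since $a\in\Fred(\hp)$ is invertible with $a^{-1}a(z^{0})=z^{0}$, it follows that $a^{-1}b\colon\hm\to\hp$ is the rank-one map $f\mapsto f(\zeta)\,z^{0}$.

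Composing with $C\colon\hp\to\hm$ then yields the rank-one endomorphism $a^{-1}bC$ of $\hp$ given by $w\mapsto (Cw)(\zeta)\,z^{0}$. For a rank-one operator $R(w)=\ell(w)\,v$ one has $\det(1+R)=1+\ell(v)$, so taking $\ell(w)=(Cw)(\zeta)$ and $v=z^{0}$ gives
\begin{equation}
\det(1 + a^{-1}bC) = 1 + (C z^{0})(\zeta) = 1 + f_{0}(\zeta),
\end{equation}
since $f_{0}=C(z^{0})$ is precisely the unique element of $\hm$ with $z^{0}+f_{0}\in W$. Tensoring with $1_{\A}$ then yields $\tau_{W}(q_{\zeta})=1+f_{0}(\zeta)$ as claimed in \eqref{tau-2}.

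The step I expect to be the main obstacle is the passage from the scalar case to the $\A$-valued (Hilbert module) setting: one must check that evaluation $f\mapsto f(\zeta)$ is well-defined and $\A$-linear on $\hm\subset H_{\A}$, that the rank-one determinant identity survives for the $\A$-linear operator $a^{-1}bC$ (so that the Fredholm determinant collapses to $1+f_{0}(\zeta)$), and that the resulting value lands in $\bC\otimes 1_{\A}$ as required by \eqref{tau-1}. The operator-theoretic hypotheses — invertibility of $a$, the Hilbert--Schmidt property of $b$ noted in \eqref{q-inverse}, and hence the trace-class character of $a^{-1}bC$ — are exactly what is needed to make $\det(1+a^{-1}bC)$ meaningful; but once the range of $a^{-1}bC$ is identified with the single module generator $z^{0}$, all of these reduce to the elementary rank-one formula.
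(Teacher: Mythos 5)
Your proof is correct and follows essentially the same route as the paper's: identify $b(z^{-k})=\zeta^{-k}a(z^{0})$ so that $a^{-1}bC$ is the rank-one map $w\mapsto (Cw)(\zeta)z^{0}$, and evaluate the determinant as $1+\Tr(a^{-1}bC)=1+f_{0}(\zeta)$. Your version merely spells out the expansion of $q_{\zeta}^{-1}$ and the $\A$-valued bookkeeping in more detail than the paper does.
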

\begin{proof}
We start with the expression of \eqref{q-inverse} in which
$b: \hm \lra \hp$ takes $z^{-k}$ to
$\zeta^{-k}q_{\zeta}^{-1}$. Thus $a^{-1}b$ is a rank-1 map
that takes $f \in \hm$ to the constant
function $f(\zeta)$. The map $a^{-1}bC$ is also of rank-1 and
the infinite determinant
\begin{equation}
\tau_W(q_{\zeta}) = \det(1 + a^{-1}b C) \otimes {1}_{\A}
= (1 + \Tr(a^{-1}b C)) \otimes {1}_{\A}.
\end{equation}
Since $C$ maps $1$ to $f_0(z)$, the lemma follows.
\end{proof}
Now for $g \in {\Gamma}_{+}(\A)$, such that $g^{-1}W$ is transverse to $\hm$,
 the \emph{operator-valued Baker function} $\psi_W(g,\zeta)$
(see Appendix \ref{Baker}) is characterized as the unique function of the form
$1 + \sum_{i=1} a_i \zeta^{-i}$, whose boundary value $\vert
\zeta \vert \lra 1$, lies in the transverse
space $g^{-1}W$. On the other hand, we have as in \cite[\S3]{SW}
the relationship
\begin{equation}\label{tau-3}
\tau_W(g) = \sigma(g^{-1}W) (g^{-1} \sigma(W))^{-1},
\end{equation}
where $\sigma$ is a global section of the determinant
line bundle $\Det_{\Gr} \lra \Gr(p,A)$.
But then it is straightforward to see from \eqref{tau-3} that
\begin{equation}\label{tau-4}
\tau_W( g \cdot q_{\zeta}) (\tau_W(g))^{-1} = \tau_{g^{-1}W}
(q_{\zeta}) = \psi_{W}(g, \zeta),
\end{equation}
by the definition of $\psi_W$ and Lemma \ref{unique-lemma}. Thus \eqref{tau-4}
relates the $\tau$ and Baker functions in this operator-valued setting.
\begin{remark}
For $t=(t_1, \ldots, t_n)$, \eqref{tau-4} can be expanded to give (cf.
\cite{Alvarez,Sato, SW}):
\begin{equation}
\begin{aligned}
\psi_W(\zeta,t) &=  \tau(t - [\zeta^{-1}]) (\tau(t))^{-1} \\ &=
\tau(t_1 - 1/\zeta, t_2 - 1/2\zeta^2, t_3 - 1/3\zeta^3,
\ldots) (\tau(t_1, t_2, t_3, \ldots ))^{-1}.
\end{aligned}
\end{equation}
\end{remark}

Since applying the BDF theorem provides for each
element of the Burchnall-Chaundy C*-algebra $\uA$
an extension of the compact operators $\K(H_{\A})$,
we summarize matters by the following:
\begin{theorem}\label{tau-ext-theorem}
Each element $\esf \in \Ext(\uA)$ determines a
corresponding $\tau$-function $(\tau_W)_{\esf}$ relative
to a transverse subspace $W \in \Gr(p,A)$. In other words,
$\Ext(\uA)$ parametrizes a family of
$\tau$-functions $\{(\tau_W)_{\esf} \}_{\esf \in \Ext(\uA)}$
for such subspaces.
\end{theorem}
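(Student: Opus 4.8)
The plan is to read off, from each extension class, a transverse subspace of $\Gr(p,A)$ and then invoke the construction \eqref{tau-2}. The starting point is the BDF dictionary already in force: by \eqref{bdf1} every $\esf \in \Ext(\uA)$ is represented by a unital $*$-monomorphism $\ul{\varrho}: \uA \lra \Q(H_{\A})$, i.e.\ an extension of $\K(H_{\A})$ by $\uA$. On the other side, Theorem \ref{map} produces the map $\Phi: J(X) \lra \Ext(\uA)$ that sends a morphism $\vp_W: \bA \lra B_W$ --- equivalently, by the Burchnall-Chaundy classification formalized in \cite{Krich1,SW}, a point of $J(X)$ and hence a subspace $W \in \Gr(p,A)$ --- to the extension $\ul{\varrho}_K = (\Pi|_{B_W}) \circ \vp_W$ of \eqref{bdf2}. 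The first step is therefore to run this correspondence backwards: given $\esf$, to select a subspace $W \in \Gr(p,A)$ whose associated morphism $\vp_W$ realizes $\esf$ under $\Phi$, and to arrange that $W$ be transverse, i.e.\ the graph of an operator $C: \hp \lra \hm$.

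Second, with such a transverse $W$ in hand I would simply define $(\tau_W)_{\esf} := \tau_W$ by means of the infinite determinant \eqref{tau-2}, $\tau_W(q_{\zeta}) = \det(1 + a^{-1} b C) \otimes 1_{\A}$, using the block form \eqref{q-inverse} of $q_{\zeta}^{-1}$. Lemma \ref{unique-lemma} guarantees that this is a well-defined $\bC \otimes 1_{\A}$-valued function on $\Gamma_{+}(\A)$, with $\tau_W(q_{\zeta}) = 1 + f_0(\zeta)$, and \eqref{tau-4} ties it to the operator-valued Baker function $\psi_W$. Assembling these assignments over all classes yields the family $\{(\tau_W)_{\esf}\}_{\esf \in \Ext(\uA)}$ asserted in the statement.

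The one point requiring care --- and the main obstacle --- is the backward passage in the first step. The map $\Phi$ of Theorem \ref{map} runs \emph{from} $J(X)$ \emph{into} $\Ext(\uA)$, so to attach a subspace to an arbitrary $\esf$ one must either restrict attention to $\IM\Phi$ (the extensions arising from the Krichever data, which is exactly the class of extensions the BDF theory delivers here) or exhibit a section selecting a representative $W$. Transversality must then be verified: one needs $W$ to meet $\hm$ in the graph position, so that the block $a$ is Fredholm and $a^{-1}bC$ is the rank-one map of Lemma \ref{unique-lemma}. I expect this to follow because the subspaces $W$ produced by the Krichever correspondence are generically transverse --- the nontransverse $W$ forming the theta divisor, off which $\tau_W \neq 0$ --- so that after a shift by a suitable $g \in \Gamma_{+}(\A)$ as in \eqref{tau-4} one may always assume $g^{-1}W$ transverse to $\hm$; the determinant \eqref{tau-2} is then precisely the evaluation of the section $\sigma$ of $\Det_{\Gr}$ appearing in \eqref{tau-3}. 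Once transversality is secured, the remainder is the routine check that \eqref{tau-2} is independent of the choices made in representing $\esf$.
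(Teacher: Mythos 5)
Your proposal inverts the correspondence in the wrong place, and the step you yourself flag as "the main obstacle" is a genuine gap that your argument does not close. You try to run Theorem \ref{map}'s map $\Phi: J(X) \lra \Ext(\uA)$ backwards, so that each class $\esf$ selects a subspace $W_{\esf}$ and the associated $\tau$-function is just the ordinary $\tau_{W_{\esf}}$ of \eqref{tau-2}. But $\Phi$ is only constructed as a map \emph{into} $\Ext(\uA)$; no surjectivity is claimed anywhere in the paper, so "selecting a subspace $W$ whose morphism $\vp_W$ realizes $\esf$" is not possible for an arbitrary $\esf$. Your fallback of restricting to $\IM\Phi$ proves a strictly weaker statement than the theorem, which asserts that \emph{each} element of $\Ext(\uA)$ determines a $\tau$-function. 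Note also that the object named in the statement is $(\tau_W)_{\esf}$ for a \emph{fixed} transverse $W$ --- the subscript $\esf$ decorates the $\tau$-function, not the subspace --- whereas your construction produces $\tau_{W_{\esf}}$ with the subspace varying; these are different parametrizations.

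The paper's proof avoids the inversion problem entirely by letting $\esf$ act through the group element rather than through the subspace. It forms $\Upsilon_{\A} = \Phi_{\A} \circ \Xi : \Gamma_{+}(\A) \lra \Ext(\uA)$ (using the $\A$-valued version $J_{\A}(X)$ and the surjection $\Xi$ of \eqref{j-map2}), and then "inverts" this concretely: the extension $\esf$, being an extension by the compacts $\K(H_{\A})$, determines the Hilbert--Schmidt off-diagonal block $b_{\esf}$ in $(q_{\zeta}^{-1})_{\esf} = \bigl(\begin{smallmatrix} a & b_{\esf} \\ 0 & d \end{smallmatrix}\bigr)$ of \eqref{q-inverse-e}, hence an element $(q_{\zeta})_{\esf} \in \Gamma_{+}(\A)$, and finally $(\tau_W)_{\esf} = \det(1 + a^{-1}b_{\esf}C)\otimes 1_{\A}$ for the given transverse $W$. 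This assigns a $\tau$-function to every $\esf$ with no need for a section of $\Phi$, and transversality of $W$ is a hypothesis carried along from \S\ref{multflow} rather than something to be verified generically via the theta divisor as you propose. If you want to salvage your route, you would need to either prove surjectivity of $\Phi$ (not available here) or switch to the paper's mechanism of encoding $\esf$ in the compact block of the multiplication operator.
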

\begin{proof}
Observe that by considering the space $J_{\A}(X)$
 of monomorphisms $\bA \otimes \A \lra B_W$
(see Appendix \S\ref{BCring+}), we
obtain from the same principles of
Theorem \ref{map}, a map $\Phi_{\A}: J_{\A}(X) \lra \Ext(\uA)$.
Then on recalling the map
$\Xi: {\Gamma}_{+}(\A) \lra J_{\A}(X)$ in
\eqref{j-map2}, we have a well-defined map
\begin{equation}\label{Psi-map}
\Upsilon_{\A}: {\Gamma}_{+}(\A)  \lra \Ext(\uA),
\end{equation}
given via composition $\Upsilon_{\A} = \Phi_{\A} \circ \Xi$. The
result follows by
essentially finding an inverse for this map. Effectively, for each
 $\esf \in \Ext(\uA)$ providing an
extension of $\K(H_{\A})$, we have from
\eqref{q-inverse} a corresponding element of $\Gamma_{+}(\A)$ given by
\begin{equation}\label{q-inverse-e}
(q_{\zeta}^{-1})_{\esf} = \bmatrix a & b_{\esf} \\ 0 & d \endbmatrix .
\end{equation}
Hence, from \eqref{tau-2}, we obtain an element $(q_{\zeta})_{\esf}
 \in \Gamma_{+}(\A)$ to which there
corresponds a $\tau$-function
\begin{equation}\label{tau-2-e}
\tau_W((q_{\zeta})_{\esf}) = \det(1 + a^{-1}b_{\esf} C) \otimes {1}_{\A}
 = (1 + \Tr(a^{-1}b_{\esf} C)) \otimes {1}_{\A},
\end{equation}
associated to a transverse subspace $W \in \Gr(p,A)$.
\end{proof}

%%%%%%%%%%%%%%%%%%%%%%%%%%%%%%%%%%%%%%%%%%%%%%%%%%%%%%%%%%%%%%%%%%%%%%
%%%%%%%%%%%%%%%%%%%%%%%%%%%%%%%%%%%%%%%%%%%%%%%%%%%%%%%%%%%%%%%%%%%%%%%

\appendix

\section{APPENDIX}\label{banachable}

%\appendix
%\begin{center}
%{\bf APPENDIX: Geometric setting}
%\end{center}

%%\renewcommand{\theequation}{A-\arabic{equation}}
%% redefine the command that creates the equation no.
%%\setcounter{equation}{0}  % reset counter
%%\renewcommand{\section}{A\arabic{section}}
%%\setcounter{section}{0}  % reset counter
%%\renewcommand{\subsection}{A\arabic{subsection}}
%%  \setcounter{subsection}{0}  % reset counter
%%\renewcommand{\theorem}{A\arabic{theorem}}
%%   \setcounter{theorem}{0}  % reset counter
%%\renewcommand{\proposition}{A\arabic{proposition}}
 %%  \setcounter{proposition}{0}  % reset counter
%%\renewcommand{\remark}{A\arabic{remark}}
%%   \setcounter{remark}{0}  % reset counter

\subsection{Hilbert modules}\label{semigroup}

Take $H$ to be a separable  Hilbert space.
Given a unital separable C*-algebra $\A$ one may consider the
standard (free countable dimensional) Hilbert module $H_{\A}$ over
$\A$,
\begin{equation}
H_{\A} = \{ \{\zeta_i\},~ \zeta_i \in \A~, ~i \geq 1:
\sum^{\infty}_{i = 1} \zeta_i \zeta_i^* \in \A ~\} \cong \oplus \A_i,
\end{equation}
where each $\A_i$ represents a copy of $\A$. We can form the
algebraic tensor product $H\otimes_{\alg} \A$ on which there is an
$\A$-valued inner product
\begin{equation}\label{inner1}
\langle x \otimes \zeta , y \otimes \eta \rangle = \langle x, y
\rangle \zeta^* \eta, \qquad x, y \in H,~ \zeta, \eta \in \A.
\end{equation}
Thus $H \otimes_{\alg} \A$ becomes an inner-product $\A$-module
whose completion is denoted by $H \otimes \A$. Given an orthonormal
basis for $H$, we have the following identification (a unitary
equivalence) given by $H \otimes \A \approx H_{\A}$ (see e.g.
\cite{Lance,DF}). For properties of compact and Fredholm operators
over Hilbert modules, see e.g. \cite{Bl}. We take $\cL(H_{\A})$ to
denote the C*-algebra of adjointable linear operators on $H_{\A}$
and $\Fred(H_{\A})$ to denote the space of Fredholm operators (see
e.g. \cite{Bl,Kasparov}). We  also make use of the Schatten ideals of
operators: the Banach spaces $\cL_{\ell} (H_{\A})$ are defined as
the subspaces of $\cL(H_{\A})$ consisting of operators $S$ with norm
satisfying $\Vert S \Vert_{\ell}^{\ell} = \Tr(S^*S)^{\ell/2} <
\infty$, (for $0\leq \ell \leq \infty$, where $\cL_{\infty}(H_{\A})
= \K(H_{\A})$ is the compact operators) \cite{Smith}.
 Since this will turn out to be an
essential ingredient (for example, in the Gelfand transform
\S\ref{commcase}) it is assumed that $H_{\A}$ is a
separable Hilbert *-module.

\begin{remark}\label{hilbertremark}
If $\phi$ denotes a state of $\A$, then we can produce a positive
semi-definite pre-Hilbert space structure on $H_{\A}$ via an induced
inner product $\langle v \vert w \rangle_{\phi} = \phi(\langle v
\vert w \rangle_{\A})$. From the Gelfand-Neumark-Segal (GNS) Theorem
there is an associated Hilbert space $H_{\phi}$ for which $\ell^2
\otimes H_{\phi}$ is the completion of $H_{\A}$ under this induced
inner product (see e.g. \cite{Bl,Fill}). When $\phi$ is understood
we shall simply denote the latter by $H(\A)$. Observe that $H(\A)$
contains $H_{\A}$ as a dense vector subspace which is a right
$\A$-module, if $\phi$ is completely positive.
The assignment of Hilbert spaces $H \mapsto H(\A)$ and
use of the $\A$-valued inner product in \eqref{inner1}, thus allows
us to modify various results  that were originally established for
the various objects $(H, \langle~,~\rangle)$ (such as the bounded
linear operators, the compact operators, etc. in the case $\A=
\bC$).
\end{remark}

%\begin{remark}
%\label{Fred-remark}
%Let us recall the definition of a \emph{Fredholm module} as
%generalized in \cite[\S17.5]{Bl}. For any C*-algebra $B$, and any
%(right) Hilbert $\A$--module $E$, we call $E$ a (left) Fredholm
%$B$-module over $\A$ provided there is given an involutive
%representation $\Pi: B \lra \cL_{\A}(E)$ together with an
%essentially (that is, modulo compact operators) unitary operator $S$
%which essentially commutes with $\Pi(B)$. Such a module then
%determines a member of the Kasparov group $KK(B, \A)$
%\cite{Bl,Kasparov,Sk} (studied  in \S
%\ref{extension}). In particular, we are interested in the case where
%$B$ is simply a C*-subalgebra of $\cL_{\A}(E)$ which is generated by
%an ordinary algebra homomorphism. In fact, we will be usually taking
%$E= H_{\A}$. In this situation, given a completely positive state
%(in the commutative case, a measure giving positive mass to each
%open subset of the spectrum), we can then pass (as in Remark
%\ref{hilbertremark}) to the Hilbert space $H(\A)$ to obtain a
%Fredholm module in the sense of \cite[IV]{Connesbook}.
%\end{remark}

\subsection{On the geometry of the Grassmannian $\Gr(p,A)$ and related properties}\label{projections}

Let $A$ be a unital complex
Banach(able) algebra with group of units $G(A)$ and space of
idempotents $P(A)$; for basic facts about Banach algebras we refer
to \cite{RGD,Rickart}. For $p \in P(A)$, we say that the orbit of
$p$ under the inner automorphic action of $G(A)$ is \emph{the
similarity class of} $p$ and denote the latter by $\Lambda =\Sim (p,A) =
G(A)*p$. Following \cite{DG2} there exists a space $V(p,A)$ modeled
on the space of proper partial isomorphisms of $A$ which is the
total space of a locally trivial principal $G(pAp)$--bundle
\begin{equation}\label{stiefel1}
G(pAp) \hookrightarrow V(p,A) \lra \Gr(p,A),
\end{equation}
where the base is the Grassmannian $\Gr(p,A)$ viewed as the image of
$\Lambda =\Sim (p,A)$ with respect to the projection map. The space
$\Gr(p,A)$ is a (complex) Banach manifold and the construction of
the bundle in \eqref{stiefel1} generalizes the usual Stiefel bundle
construction in finite dimensions. If $A[p]$ denotes the
commutant of $p \in A$, we set $G[p] = G (A[p])$.  If we further assume that $A$ has
the unital associative *-algebra property, there is the
corresponding unitary group $U(A) \subset G(A)$. Thus on setting $U[p] = U(A) \cap G[p]$,
we can specialize \eqref{stiefel1} as an analytic, principal
$U[p]$-bundle $U[p] \hookrightarrow V(p,A) \lra \Gr(p,A) \cong U(A)/U[p]$.

\begin{remark}\label{others-remark}
The references \cite{DG2,DGP3} describe the possible analytic structures of the spaces $P(A), \Gr(p,A), V(p,A)$ and $\Lambda$,
and related objects in some depth. For such spaces we wish to point out that in the Banach manifold and C*-algebra setting there is available
related work by other authors, using different techniques, that is applicable to a broad scope of independent problems to those considered here; see for instance
\cite{ALRS,Beltita1,Beltita2,Beltita3,CPR1,Helm1,MS1,MS2,PR1,Raeburn,Upm,Wilkins} (and the references
to related work cited therein). The present authors thus acknowledge an inevitable overlap with certain technical details in this regard.
\end{remark}

We proceed with $A$ as defined in \eqref{staralgebra}
and take $E$ to be its underlying complex Banach space that admits a splitting subspace decomposition
compatible with the polarization \eqref{polar1} (for the
theory of compact and Fredholm operators extended to Banach spaces
we refer to \cite{ZKKP}). In the following we restrict to the $\ell =2$ (Hilbert-Schmidt) case
and consider as in \cite{DGP1,PS,SW}
the explicit form of ${G}(A)$ as the group of automorphisms
of $\Gr(p,A)$ as given by
\begin{equation}\label{ghat1}
\begin{aligned}
G(A) = &\{ \bmatrix T_1 & S_1 \\ S_2& T_2 \endbmatrix : T_1
\in \Fred(\hp),~ T_2 \in \Fred(\hm),~ S_1 \in \cL_2(\hm,\hp), \\
~ &S_2 \in \cL_2(\hp,\hm)\},
\end{aligned}
\end{equation}
whereby $\Gr(p, A)= {G}(A)/G[p]$ (see
\cite{DG2,DGP1}). Note that for $p \in P^{\sa}$, we have $U[p] \cong U(A) \cap G[p] \cong U(A) \cap (U(\hp) \times U(\hm))$.
If we take $\rho$ to denote the left action of
$U[p]$ on $H(p)$ where the latter denotes the underlying Hilbert
space of $pA \cong \hp$, then following \cite[\S 5]{DG3} we have the associated universal vector bundle to \eqref{stiefel1}
\begin{equation}\label{unibundle4}
\gamma(p,A) = U(A) \times_{U[p]} H(p) \lra U (A) /U[p] = \Gr(p,A),
\end{equation}
as was established in \cite[\S 5]{DG3}.
We remark that homogeneous vector bundles (such as \eqref{unibundle4}) arising
from certain representations have been more recently studied in \cite{Beltita3}.
From works such as \cite{AL,Beltita2,Beltita3,Helm1,PS} it is straightforward to deduce that
$\Gr(p,A)$ admits the structure of a Hermitian
manifold and that the universal vector bundle  ${\E}(p,A) \lra \Gr(p,A)$ admits the
structure of a Hermitian vector bundle.

The space $\Gr(p,A)$ may be realized more specifically in the
following way. Suppose that a fixed $p \in P(A)$ acts as the
projection of $H_{\A}$ on $\hp$ along $\hm$. Then in a similar way to \cite{PS,SW}, the space
$\Gr(p,A)$ is the Grassmannian consisting of subspaces $W = r(H_{\A})$, for $r \in
P(A)$, such that:
\begin{itemize}\label{proj2}
\item[(1)]
the projection $p_{+} = pr~:~ W \lra \hp$ is in $\Fred(H_{\A})$, and

\med
\item[(2)]
the projection $p_{-} = (1 - p)r ~:~ W \lra \hm$ is in $\cL_2(\hp,
\hm)$.
\end{itemize}
Alternatively, for (2) we may take projections $q \in P(A)$ such
that for the fixed $p \in P(A)$, the difference $q - p \in
\cL_2(\hp, \hm)$. Further, there is the \emph{big cell} $C_{\rm{b}}
= C_{\rm{b}}(p_1,A) \subset \Gr(p,A)$ defined as the collection of all
subspaces $W \in \Gr(p,A)$ such that the projection $p_{+} \in
\Fred(H_{\A})$ is an isomorphism.

In regard of *-subalgebras of $A$, the following lemma is easily deduced from \cite[\S2.4]{DGP1}
\begin{lemma}
Let $B \subset A$ be a Banach *-subalgebra of $A$, with inclusion
$h: B \lra A$. Then there is an induced inclusion of Grassmannians
$\Gr(q,B) \subset \Gr(p,A)$ where for $q \in P(B)$ we have set $p =
h(q) \in P(A)$.
\end{lemma}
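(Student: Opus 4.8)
The plan is to exhibit the inclusion at the level of the homogeneous-space description $\Gr(p,A) \cong G(A)/G[p]$ recorded above, and then to confirm it geometrically through the realization of the Grassmannian as a family of subspaces of $H_{\A}$. Since $h$ is the literal inclusion of a subalgebra, $p = h(q)$ is the \emph{same} element of $A$ as $q$; in particular $p$ and $q$ act identically on $H_{\A}$ and $p H_{\A} = q H_{\A}$. This observation is what makes the two polarizations compatible and drives the whole argument.

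First I would observe that $h$ restricts to a homomorphism of unit groups $G(B) \lra G(A)$ and carries the commutant $B[q]$ into $A[p]$: if $g \in G(B)$ commutes with $q$ in $B$, then $h(g)$ commutes with $p = h(q)$ in $A$, and the same holds for $h(g)^{-1}$, so $G[q] = G(B[q]) \subseteq G(A[p]) = G[p]$. Hence the assignment $g\,G[q] \mapsto h(g)\,G[p]$ descends to a well-defined map $G(B)/G[q] \lra G(A)/G[p]$, that is, $\Gr(q,B) \lra \Gr(p,A)$. For injectivity I would use that $h$ is injective: if $g_1 G[q]$ and $g_2 G[q]$ have the same image, then $h(g_2^{-1}g_1)$ commutes with $p = h(q)$, so $h\bigl((g_2^{-1}g_1)q\bigr) = h\bigl(q(g_2^{-1}g_1)\bigr)$, whence $(g_2^{-1}g_1)q = q(g_2^{-1}g_1)$ and $g_2^{-1}g_1 \in G[q]$; thus the induced map is one-to-one.

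Next I would match this with the explicit subspace realization. A point of $\Gr(q,B)$ is a subspace $W = r(H_{\A})$ with $r \in P(B)$ such that $qr : W \lra \hp$ is in $\Fred(H_{\A})$ and $(1-q)r : W \lra \hm$ is Hilbert--Schmidt. Because $P(B) \subseteq P(A)$ and $p = q$ as operators on $H_{\A}$, these are exactly conditions (1)--(2) defining membership in $\Gr(p,A)$ relative to $p$; hence every such $W$ lies in $\Gr(p,A)$, recovering the set-theoretic inclusion of the previous step and identifying it as the tautological map $W \mapsto W$.

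Finally, to upgrade the set inclusion to an inclusion of Banach manifolds I would check that the induced map is analytic and an embedding. Analyticity is immediate, since the map is induced by the bounded inclusion $h$ on the relevant Banach Lie groups and the quotient projections are analytic submersions. The embedding property follows from the graph/transversal charts of $\Gr(p,A)$ described in \cite[\S2.4]{DGP1}: near a transverse $W$ the chart identifies a neighborhood with $\cL_2(\hp,\hm)$, and the corresponding chart for $\Gr(q,B)$ is its restriction to the closed subspace cut out by $P(B)$, so the local models are linear inclusions of Banach spaces. I expect this last point---verifying that the $B$-charts are genuinely restrictions of the $A$-charts, so that the image is a locally closed submanifold rather than merely an injective continuous image---to be the only place requiring care; everything else is a direct unwinding of the definitions together with the identity $G(B)\cap G[p]=G[q]$.
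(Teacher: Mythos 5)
Your argument is correct, and in fact the paper offers no proof of this lemma at all --- it simply asserts that the statement ``is easily deduced from [DGP1, \S 2.4]'' --- so you have supplied the unwinding that the authors left to the reader. Your route (noting $p=h(q)=q$ as operators, $G(B)\subseteq G(A)$, $h(G[q])\subseteq G[p]$ with $G(B)\cap G[p]=G[q]$ giving injectivity of $G(B)/G[q]\lra G(A)/G[p]$, and then matching this with the tautological map $W\mapsto W$ on the subspace realization) is exactly the intended one, and the two descriptions are consistent since the coset $gG[q]$ and its image $h(g)G[p]$ both correspond to the range $g(pH_{\A})$. Two small caveats. First, your well-definedness and injectivity steps implicitly use that $B$ is a \emph{unital} subalgebra sharing the identity of $A$ (so that $G(B)\subseteq G(A)$); this is the standing convention here but is worth saying. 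Second, your final paragraph claims more than the lemma does: the statement only asserts an inclusion $\Gr(q,B)\subset\Gr(p,A)$, i.e.\ an injection compatible with the subspace realization, whereas a locally closed embedding of Banach manifolds would require the chart model $(1-q)Bq$ to sit as a closed subspace of $(1-p)Ap$, which need not hold if $B$ carries a strictly finer norm than the one induced from $A$ (exactly the situation of $\cL_J(H_{\A})$ inside $\cL(H_{\A})$). You correctly flag this as the delicate point; for the lemma as stated it can simply be omitted.
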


\begin{example}\label{restrictexample}
Let $\B \subset \A$ be a C*-subalgebra. Then it is straightforward to see
that we have an inclusion
$\cL_J(H_{\B}) \lra A= \cL_J(H_{\A})$.
In particular, when $\B \cong \bC$ and $H = L^2(S^1, \bC)$ for
which there is a polarization $H=H_{+} \oplus H_{-}$ ($H_{+} \cap
H_{-} = \{0\}$), the inclusion $\cL_J(H) \lra \cL_J(H)\otimes \A$
induces an inclusion
$\Gr(H_{+}, H) \subset \Gr(p,A)$ where $\Gr(H_{+}, H)$ is the
`restricted' Grassmannian as used in \cite{PS,SW}.
\end{example}

\subsection{The Burchnall-Chaundy ring
and holomorphic geometry}\label{essential}

Let us briefly recall how for $\A= \bC$ the relevant algebra of differential
operators of the KP flows, is related to
the Grassmannian $\Gr(q,B)$ \cite{SW}.
Let $\bB$ denote the algebra of analytic functions $U \lra \bC$
where $U$ is a connected open neighborhood of the origin in $\mathbb
C$. The (noncommutative) algebra $\bB[\del]$ of linear differential
operators with coefficients in $\bB$, consists of expressions
\begin{equation}\label{ODO}
\sum_{i = 0}^N a_i~ \del^i,~(a_i \in \bB,~~{\text{for some}}~ N \in
\bZ ).
\end{equation}
Here $\del := \del/\del x$ and the $a_i$ can be regarded as
 operators on functions, with multiplication
\begin{equation}
[\del, a] = \del a - a \del = \del a/\del x.
\end{equation}
More generally, we pass to the algebra $\bB[[\dm]]$ of formal
pseudodifferential operators with coefficients in $\bB$. This
algebra is obtained  by formally inverting the operator $\del$ (see
e.g. \cite{Sato}) and taking Laurent series as in (\ref{ODO}), with
$-\infty <i\le N$.

The ring $\bA$ is assumed to be a commutative subring of
$\bB[\partial ]$, whose joint spectrum is a complex irreducible
curve $X' = \Spec(\bA)$ with completion a non-singular algebraic
curve $X = X'\cup \{x_{\infty}\}$ of genus $g_X \geq 1$. We recall
from \cite{SW} the following associated quintuple of data $(X,
x_{\infty}, z,\cL, \vp)$: $\cL \lra X$ is a holomorphic line bundle,
$x_{\infty}$ is a smooth point of $X$, $z$ the inverse of a local
parameter on $X$ at $x_{\infty}$, where $z$ is used to identify a
closed neighborhood $X_{\infty}$ of $x_{\infty}$ in $X$ with a
neighborhood of the disk $D_{\infty} = \{z : \vert z \vert \geq 1\}$
in the Riemann sphere, and $\vp$ is a holomorphic trivialization of
$\cL$ over $X_{\infty} \subset X$. Subsequently, we consider
$L^2$--boundary values of $\cL$ over $X\backslash {D_{\infty}}$ and
$\vp$ identifies sections of $\cL$ over $S^1$ with $\bC$-valued
functions. Thus we arrive at the (separable) Hilbert space $H =
L^2(S^1, \bC)$ together with a
polarization $H = H_{+} \oplus H_{-}$ (with $H_{+} \cap H_{-} = \{0\}$)
in the case $B = \mathcal L_J (H)$. Further, the quintuple $(X,
x_{\infty}, z,\cL,\vp)$ can be mapped, by the \textit{Krichever
correspondence}, to a point $ W \in \Gr(q,B)$ (cf \cite{Krich1,SW}).
Following \cite{SW}, the properties of the projections $p_{\pm}$ in
\S\ref{projections} apply and the kernel (respectively, cokernel) of
the projection $p_{+} : W \lra H_{+}$ is isomorphic to the
sheaf-cohomology group $H^0(X, \cL_{\infty})$ (respectively, $H^1(X,
\cL_{\infty})$) where $\cL_{\infty} = \cL \otimes \mathcal
O(-x_{\infty})$.

Let $\Gamma_{+}$ denote the group of real analytic
mappings $f: S^1 \lra \bC^*$, extending
to holomorphic functions $f: D_0 \lra \bC^*$ in the closed unit disk
$D_0 =\{z \in \bC: \vert z \vert \leq 1 \}$, such that $f(0)=1$.
The natural action of $\Gamma_{+}$ on $\Gr(q,H)$ corresponds to its action
on $(X,x_{\infty}, z,\cL,\vp)$, and further from \cite{SW}(Prop. 6.9)
there exists a surjective
homomorphism
\begin{equation}\label{j-map1}
\Gamma_{+} \lra J(X),
\end{equation}
where $J(X)$ is the Jacobian torus of $X$
(recall that $J(X)$ is the commutative group of holomorphic line
 bundles of zero degree over
$X$). The  Krichever correspondence \cite{Krich1} (see also
\cite[Proposition 6.2]{SW}) links the data of the quintuple to a
flow of multiplication operators $\Gamma_{+}$ on $H$, thus inducing
 a linear flow on
$J(X)$ of $X$. Furthermore, this flow corresponds to the evolution
of solutions of the generalized KdV flow.

We now take the maps in the definition of $\Gamma_{+}$ to be $\A$-valued
and denote the resulting group by $\Gamma_{+}(\A)$.
An action of ${\Gamma}_{+}(\A) \subset G(A)$ on $\Gr(p,A)$ is induced via its
restriction to the subspace $\hp$ in any polarization.
Recalling that
$\A = C(Y)$, then clearly, in the special case $Y=\{pt\}$ we recover
the above group $\Gamma_{+}$. In view of \eqref{aform} and
$\Gr(q,B) = \Gr(H_{+},H)$, the restriction
${\Gamma}_{+}(\A)\vert\Gr(q,B)$ as a $Y$-parametrized family
$\{(\Gamma_{+})_y\}_{y \in Y}$. Effectively, we then have a flow of
multiplication operators as given by
\begin{equation}\label{flowops}
\begin{aligned}
\Gamma_{+}(\A) &= \{ (\exp(\sum_{\alpha} a_{\alpha} z^{a}):
a_{\alpha} \in \A \} \\
&\cong \{ (\exp(\sum_{a} t_a z^{a}))_y \}_{y \in Y}.
\end{aligned}
\end{equation}

%Also, point evaluation at $y \in Y$ produces from \eqref{j-map1} a map
%\begin{equation}\label{j-map2}
%\Xi: \Gamma_{+}(\A) \lra \Gamma_{+} \lra J(X),
%\end{equation}
%with respect to $\A$-valued coefficients,
%and thus a family of maps $Y \lra \{\Gamma_{+} \lra J(X) \}$.

We also have the group $\Gamma_{-}(\A)$ of
multiplication operators on $H_{\A}$ given by the group of continuous maps $g:
\bC\backslash \Int(D) \lra \A$ such that $g$ is real analytic in
$x \in \bC \backslash \Int(D)$ extending to $g(z)$ holomorphic in
$z \in \bC \backslash \Int(D)$, and $g(\infty) = 1$.

\subsection{The Burchnall-Chaundy ring with coefficients in $\A$}
\label{BCring+}

Continuing with $\A\cong C(Y)$ a commutative (separable) C*-algebra,
we now consider the algebra $\mathcal{O}(U, \A)$ of $\A$-valued
analytic functions
$U \lra \A$ where $U$ is a connected open neighborhood of the origin
in $\bC$. In turn, we consider the algebra $\mathcal{O}(Y,\bB[\del])$
of linear
differential operators with coefficients in $\mathcal{O}(Y,\bB)$
consisting of
expressions (\ref{ODO}) where essentially the same discussion in \S
\ref{essential} applies verbatim (note that this
algebra contains the algebraic
tensor product $\bB[\del] \otimes \A$). In particular,
 the coefficients $a_i$ are now thought of as $\A$-valued
functions.

Relative to $W \in \Gr (p, A)$, the algebra
\begin{equation}\label{bwalgebra}
B_W = \{ f(z) = \sum^N_{s = - \infty} c_s z^{s} : s \in \mathbb N,~
c_s \in \A,~ f(z)W\subset W \},
\end{equation}
(denoted by $A_W$ in \cite{SW}) contains the coordinate
ring of the curve $X \smallsetminus \{x_{\infty}\}$. In fact,
the set of monomorphisms $\bA \lra B_W$ is identified with
$J(X)$ \cite{Krich1,SW}. Let then $J_{\A}(X)$ denote the set of
monomorphisms $\bA \otimes \A \lra B_W$. In view of \eqref{j-map1}
and the appropriate inclusions, we
obtain a commutative diagram
\begin{equation}\label{jmap-comm}
\begin{CD}
\Gamma_{+}  @>>> \Gamma_{+}(\A)
\\ @VVV   @VVV \\ J(X) @>>> J_{\A}(X)
\end{CD}
\end{equation}
and thus from \eqref{j-map1} a well-defined surjective homomorphism
\begin{equation}\label{j-map2}
\Xi: \Gamma_{+}(\A) \lra J_{\A}(X).
\end{equation}
Observe that since $\A = C(Y)$, \eqref{j-map2} is equivalent to
a family of maps $Y \lra \{\Gamma_{+} \lra J(X) \}$.

\subsection{The abstract operator-valued Baker function}\label{Baker}

%Recalling Definition \ref{cdef1}
%and \ref{cdef2}
Here we consider subspaces $W \in \Gr(p, A)$ of the form $W = gh_g \hp$
with $g \in {\Gamma}_{+}(\A)$ and $h_g \in {\Gamma}_{-}(\A)$.
Let ${\Gamma}_{+}^W (\A)$ denote the subgroup
of elements $g \in {\Gamma}_{+}(\A)$ such that $g^{-1}W$
is transverse to $\hm$. Also, for
$g \in {\Gamma}_{+}^W(\A)$, we consider orthogonal projections
\begin{equation}
p_{1}^g = p\vert g^{-1}(W) :~g^{-1}(W) \lra \hp ,~ p_{1}^g \in \Fred(H_{\A}),
\end{equation}
(i.e. $p_{1}^g \in \Fred(H_{\A}\cap P^{\sa}(A)$) and define in relationship to the
big cell $C_b$, the subgroup of
${\Gamma}_{+}^W(\A)$ as given by
\begin{equation}
\wti{\Gamma}_{+}^W (\A) = \{ g \in {\Gamma}_{+}(\A) :  p_{1}^g ~\text{is
an isomorphism}  \}.
\end{equation}

\begin{definition}\label{opbaker}
{The operator-valued Baker function} $\psi_W$ associated to the
subspace $W \in C_{\mathrm{b}}\subset \Gr(p,A)$
is defined formally as:
\begin{equation}\label{Baker1}
\psi_W = (p_{1}^g)^{-1}(\mathbf 1) = (\sum_{s =0}^{\infty} a_s (g)~
\zeta^{-s})~ g(\zeta) \in W g(\zeta) ,
\end{equation}
where $g \in \wti{\Gamma}_{+}^W(\A)$ and the $a_s$ are analytic
$\A$-valued (operator-matrix) functions on ${\Gamma}_{+}^W(\A)$
extending to all $\A$-valued functions $g \in {\Gamma}_{+}(\A)$
meromorphic in $z$ (cf. \cite{DGP1}).
\end{definition}

\subsection{A formal integral operator}\label{integral-op}

As in \cite[\S6]{DGP1}, following \cite{SW}, there
exists a formal integral operator $K \in \bB[[\dm]]$ given by
\begin{equation}\label{integralop}
K = 1 + \sum^{\infty}_{s=1} a_s(x)~ \del^{-s},
\end{equation}
(where the $a_s$ are $\A$-valued functions) unique up to a constant
coefficient operator such that $L = K (\del^n) K^{-1}$ belongs to
$\bA$. Under the above correspondence the (formal) Baker function
$\psi_W$ is defined as $\psi_W = K e^{xz}$, the main point being
that the function $\psi_W$ will be an eigenfunction for the operator
$L^{1/n}=\partial +[\hbox{lower-order terms]}$, that is, $L^{1/n} \psi_W =  z~ \psi_W$,
and accordingly
\begin{equation}
\begin{aligned}
\psi_W (x , z) = (1+ \sum_{s =1}^{\infty} a_s (x)~ z^{-s}) ~e^{x z}.
\end{aligned}
\end{equation}
%(This is actually the `stationary' Baker function since we have
%suppressed the time variables.)
Using a form of the Sato correspondence \cite{Sato}, we established
(for $\A$ not necessarily commutative):

\begin{theorem}{\rm{\cite[Theorem 6.1]{DGP1}}}\label{rep1}
Given the Baker function $\psi_W$ associated to a subspace $W \in
\Gr(p,A)$, the Burchnall-Chaundy ring $\bA \subset
 \bB[[\dm]]\otimes \A$ is conjugated into
$A\subset \mathcal L_J(H_{\A})$ as a commutative subring, the
conjugating integral operator $K$ being unique up to constant
coefficient operators.
\end{theorem}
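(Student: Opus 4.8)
The plan is to follow the Sato--Segal--Wilson dressing construction, adapted to the Hilbert-module setting, in four movements: build the conjugating operator $K$ out of the Baker function, show that conjugation by $K$ turns the differential operators of $\bA$ into bounded multiplication operators on $H_{\A}$, check that these actually land in the restricted algebra $A$, and finally pin $K$ down up to constant-coefficient factors. To begin, I would take the operator-valued Baker function $\psi_W$ attached to $W \in \Gr(p,A)$ from Definition \ref{opbaker} and \S\ref{integral-op} in its normalized form $\psi_W = (1 + \sum_{s\geq 1} a_s(x)\, z^{-s})\, e^{xz}$ and read off the formal integral operator $K = 1 + \sum_{s\geq 1} a_s(x)\, \del^{-s}$ of \eqref{integralop} through $\psi_W = K e^{xz}$, so that $L = K\,\del^n\,K^{-1}$ as in \S\ref{integral-op}. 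The free exponential obeys $\del^k e^{xz} = z^k e^{xz}$, so on the spectral side $\del$ acts as multiplication by $z$; this is the dictionary that converts $x$-differential operators into $z$-multiplication operators on $H_{\A} = L^2(S^1,\A)$.

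Next, for $L \in \bA$ I would use that $\psi_W$ is a common eigenfunction of the ring, $L\psi_W = f_L(z)\,\psi_W$, with symbol $f_L$ lying in the stabilizer ring $B_W$ of \eqref{bwalgebra}; this is precisely the Krichever/Burchnall--Chaundy identification recalled in \S\ref{essential} and \S\ref{BCring+}. Since $\psi_W = K e^{xz}$ and $f_L(z)$ commutes with $K$ (they act on different variables), one obtains $K^{-1} L K\, e^{xz} = f_L(\del)\, e^{xz}$, whence $K^{-1} L K = f_L(\del)$ is a constant-coefficient operator; the stabilizer condition $f_L(z)W \subset W$ is exactly what annihilates the negative-order part. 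Under the dictionary $\del \leftrightarrow$ multiplication by $z$, the operator $f_L(\del)$ becomes $M_{f_L}$ on $H_{\A}$, which is bounded because $|z|=1$ on $S^1$ and $f_L$ restricts there to a continuous $\A$-valued function. Commutativity of the image is then immediate: the symbols multiply commutatively, so $[M_{f_L}, M_{f_{L'}}] = 0$, and the map $i_K$ of \eqref{inclusion1} is a ring homomorphism.

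The third step is to verify that each $M_{f_L}$ genuinely lies in $A = \cL_J(H_{\A})$ of \eqref{staralgebra}, i.e. is adjointable and satisfies $[J, M_{f_L}] \in \cL_2(H_{\A})$ for the norm \eqref{jnorm}. Adjointability is clear since $f_L$ is a continuous $\A$-valued function on $S^1$. For the Hilbert--Schmidt commutator I would expand $f_L = \sum_{s \leq N} c_s z^s$ relative to the polarization \eqref{polar1}: each monomial $M_{z^s}$ has finite-rank commutator with $J$ (of rank $\leq |s|$), and the defining Hilbert--Schmidt condition (2) on $W \in \Gr(p,A)$ from \S\ref{projections}, together with the module Schatten-ideal estimates of \S\ref{semigroup}, controls the tail so that $[J, M_{f_L}] \in \cL_2(H_{\A})$. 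I expect this to be the main obstacle: in the scalar Segal--Wilson theory the estimate is routine, but with Banach-algebra coefficients one must carry out the Schatten-class bookkeeping in the Hilbert-module norm \eqref{jnorm} to guarantee that the conjugated ring sits inside the restricted algebra $A$ rather than merely in $\cL_{\A}(H_{\A})$.

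Finally, for uniqueness I would suppose $K_1, K_2$ are two monic integral operators of the form \eqref{integralop} with $K_i\,\del^n\,K_i^{-1} = L$, so that $P := K_2^{-1} K_1$ commutes with $\del^n$. Writing $P = 1 + \sum_{s\geq 1} p_s(x)\,\del^{-s}$ and expanding $[P, \del^n] = 0$ order by order forces $\del p_s/\del x = 0$ recursively, so each $p_s$ is a constant-coefficient $\A$-valued symbol; hence $K_1 = K_2 P$ with $P$ constant-coefficient, which is exactly the asserted uniqueness of $K$ up to constant coefficient operators.
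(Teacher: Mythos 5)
The paper itself contains no proof of Theorem \ref{rep1}: it is imported verbatim from \cite[Theorem 6.1]{DGP1}, the only methodological indication given being that it was established ``using a form of the Sato correspondence.'' Your proposal reconstructs exactly that argument---the Segal--Wilson dressing adapted to $H_{\A}$ (read $K$ off $\psi_W$, conjugate $\bA$ to multiplication operators $M_{f_L}$ via the common-eigenfunction property, verify $[J,M_{f_L}]\in\cL_2(H_{\A})$, prove uniqueness of $K$ order by order from $[P,\del^n]=0$)---so it is essentially the same approach as the cited source; the one local fix needed is that the Hilbert--Schmidt tail estimate should be sourced from the analyticity of $f_L$ in a neighborhood of $S^1$ (geometric decay of its Laurent coefficients $c_s$, giving $\sum_s |s|\,\Vert c_s\Vert^2<\infty$), not from condition (2) defining $W\in\Gr(p,A)$, which concerns the projection of $W$ onto the negative subspace and plays no role in that estimate.
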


\subsection{The $Y$-parametrized holomorphic data}\label{ydata}

Since we have effectively tensored the coefficients of $\bA$ by
$\A= C(Y)$, we can modify the discussion in \S \ref{essential}
using the expression for $A$ in \eqref{aform}.
Specifically, the same construction involving the data in \S
\ref{essential} yields a \emph{$Y$-parametrized map} to $\Gr(q,B) =
\Gr(H_{+},H)$ where we recall $B= \cL_J(H)$ from \S \ref{commcase}.
Consequently, for a $Y$-parametrized quintuple in \S\ref{essential},
$y \in Y$, we obtain the assignment
\begin{equation}\label{quin}
\{(X, x_{\infty}, z_y, \cL_y, \vp_y)\}_{y\in Y} ~\mapsto ~\text{certain
points}~ W_y \in Y \times \Gr(q,B).
\end{equation}

As was noted in \S\ref{multflow}, the restriction
${\Gamma}_{+}(\A)\vert\Gr(q,B)$ acts as a family of multiplication
operators $\{(\Gamma_{+})_y\}_{y \in Y}$ on subspaces $W \in
\Gr(q,B)$. Following \cite[\S6]{SW}, an element $g \in \Gamma_{+}$
serves as a transition function for a  line bundle over $X$ (that
is, $g \in \Gamma_{+}$ determines a point in the Picard group
${\mathrm{Pic}}^{g_X}(X)$ \cite{GH} which is then twisted into a
point of the Jacobian  $J(X)$; we recall that $g_X$ denotes the
genus of $X$). The restricted action ${\Gamma}_{+}(\A)\vert\Gr(q,B)$
gives a parametrized version:

\begin{proposition}\label{jacobian}
Let $\mathcal J_0(X \times Y)$ denote the space of topologically
trivial line bundles on $X \times Y$. Then there exists a
well-defined map
\begin{equation}
{\Gamma}_{+}(\A)\vert\Gr(q,B) \lra \mathcal J_0(X \times Y),
\end{equation}
given by $g_y \mapsto \cL_{g_y}$, and an induced action of
${\Gamma}_{+}(\A)\vert\Gr(q,B)$ on $\mathcal J_0(X \times Y)$.
\end{proposition}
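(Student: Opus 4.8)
The plan is to build the map by starting from the scalar case and tensoring through by $\A = C(Y)$, thereby reducing everything to the classical Krichever/Segal--Wilson construction parametrized over $Y$. First I would recall from \eqref{j-map1} and \cite[Prop.~6.9]{SW} that in the case $\A = \bC$ an element $g \in \Gamma_{+}$ serves as a transition function determining a point of the Picard group $\mathrm{Pic}^{g_X}(X)$, which twists into a point of $J(X)$, and hence a (topologically trivial) line bundle $\cL_g \lra X$. The restriction ${\Gamma}_{+}(\A)\vert\Gr(q,B)$ realizes the $Y$-parametrized family $\{(\Gamma_{+})_y\}_{y \in Y}$ of \eqref{flowops}, so an element $g_y$ of this family assigns, fibrewise over each $y \in Y$, a transition function for a line bundle over $X$. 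The assignment $g_y \mapsto \cL_{g_y}$ is then defined by gluing these fibrewise bundles into a single bundle over $X \times Y$, and one checks it lands in $\mathcal J_0(X \times Y)$ since each fibre is topologically trivial and the $Y$-dependence is continuous by the $C(Y)$-valued structure in \eqref{aform}.

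Next I would verify that the map is well defined. The subtlety is that the passage from a transition function to a line bundle is only canonical up to the choice of trivialization; but because we work with the \emph{topologically trivial} component $\mathcal J_0$, and because the classical map \eqref{j-map1} is already known to be a well-defined surjective homomorphism onto $J(X)$, the $Y$-parametrized version inherits well-definedness fibrewise. Concretely, I would use the surjective homomorphism $\Xi : {\Gamma}_{+}(\A) \lra J_{\A}(X)$ of \eqref{j-map2}, together with the observation recorded after \eqref{j-map2} that $\Xi$ is equivalent to a family of maps $Y \lra \{\Gamma_{+} \lra J(X)\}$; composing this with the classical identification of $J(X)$ with isomorphism classes of degree-zero (equivalently, topologically trivial) line bundles gives the desired map into $\mathcal J_0(X \times Y)$.

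For the induced action, I would observe that $\Gamma_{+}(\A)$ is a group and that the classical flow of multiplication operators acts on $J(X)$ by translation (the linear flow of the generalized KdV hierarchy, as recalled in \S\ref{essential}). Parametrizing by $Y$, this gives a fibrewise translation action of $\{(\Gamma_{+})_y\}_{y \in Y}$ on the fibres of $\mathcal J_0(X \times Y)$, and continuity in $y$ follows once more from the $C(Y)$-valued structure. Compatibility of the map $g_y \mapsto \cL_{g_y}$ with this action is then just the statement, already classical in the scalar case, that the Krichever correspondence intertwines the $\Gamma_{+}$-action on subspaces $W$ with translation on the Jacobian.

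The main obstacle I anticipate is ensuring genuine continuity (rather than merely fibrewise/pointwise behaviour) of the assembled bundle over the product $X \times Y$ and of the induced action: one must confirm that the gluing data depend continuously on $y$ so that the local transition functions patch into an honest element of $\mathcal J_0(X \times Y)$, and that the translation action varies continuously. This is where the identification $\A \cong C(Y)$ and the continuity built into the norm of $A$ via \eqref{aform}, together with the $Y$-parametrized Krichever map of \eqref{quin}, do the essential work; the remaining verifications (injectivity is not claimed, only well-definedness and surjectivity onto the relevant component) reduce to the scalar results of \cite{Krich1,SW} applied fibrewise.
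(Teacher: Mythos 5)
Your proposal is correct and follows essentially the same route as the paper: the paper likewise adapts \cite[Proposition 6.9]{SW}, constructing $\cL_{g_y}$ by gluing trivial bundles over $X_0\times Y$ and $X_{\infty}\times Y$ via the transition function $g_y=(g,y)$ near $S^1\times Y$, correcting the degree from $g_X$ to zero by an element of $\Gamma_{-}$ (your ``twist'' into $J(X)$), and defining the action by tensoring with $(\cL_{g_y},\vp_{g_y})$. The only cosmetic difference is that the paper glues once over the product $X\times Y$ rather than assembling fibrewise bundles, which disposes of your continuity concern automatically.
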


\begin{proof}
We follow \cite[Proposition 6.9]{SW} closely. Let $X_0 = X\backslash
X_{\infty}$, and let $\mathcal L_{g_y} \lra X \times Y$ be the  line
bundle obtained by taking topologically trivial line bundles over
$X_0 \times Y$ and $X_{\infty} \times Y$ and glueing them by
$g_y=(g,y)$ over an open neighborhood of $S^1 \times Y$, where $g
\in \Gamma_{+}$. This line bundle has degree $g_X$, so it is not
topologically trivial, but by changing $g_y$ by an element of
$\Gamma_-$ we achieve degree zero. Thus we obtain a map
\begin{equation}
\begin{aligned}
{\Gamma}_{+}(\A)\vert\Gr(q,B)  &\lra \mathcal J_0(X \times Y), \\
g_y &\mapsto \cL_{g_y}
\end{aligned}
\end{equation}
where $\cL_{g_y}$ has a $\vp_g$-induced trivialization
\begin{equation}
\vp_{g_y} = (\vp_g,y) : \cL_{g_y}\vert X_{\infty} \times Y \lra \bC
\times X_{\infty} \times Y.
\end{equation}
Consequently, $(\Gamma_{+})_y$ acts on $(X,x_{\infty},z)$ and on
$(\cL, \vp)$ via the tensor product with $(\cL_{g_y}, \vp_{g_y})$.
\end{proof}

In this way the action of ${\Gamma}_{+}(\A)\vert\Gr(q,B)$ on
$Y$-parametrized solutions of the generalized $n$-th KdV equations
corresponds to $\cL \lra \cL\otimes\cL_{g_y}$. For a fixed $y \in
Y$, the assignment $g \mapsto \cL_g$ defines a surjective group
homomorphism $\Gamma_{+} \lra J(X)$, as in the case $\A \cong \bC$
\cite[Remark 6.8]{SW}.

%%%%%%%%%%%%%%%%%%%%%%%%%%%%%%%%%%%%%%%%%%%%%%%%%%%%%%%%%%%%%%%%%%%%%%%

%\textbf{Acknowledgement}

\end{document}